\numberwithin{equation}{section}
\numberwithin{figure}{section}
\numberwithin{table}{section}
\newtheorem{THEOREM}{Theorem}[section]
\newtheorem{corollary}[THEOREM]{Corollary}
\newtheorem{proposition}[THEOREM]{Proposition}
\renewcommand{\a}{\alpha}
\renewcommand{\b}{\beta}
\newcommand{\R}{\mathbb{R}}
\newcommand{\lam}{\lambda}
\newcommand{\bx}{\mathbf{x}}
\newcommand{\bau}{\overline{u}}
\newcommand{\be}{\mathbf{e}}
\newcommand{\gam}{\gamma}
\newcommand{\bv}{\mathbf{v}}
\renewcommand{\th}{\theta}
\newcommand{\bw}{\mathbf{w}}
\newcommand{\eps}{\varepsilon}
\newcommand{\bm}{\mathbf{m}}
\newcommand{\bam}{\overline{m}}
\newcommand{\mN}{m^N}
\newcommand{\bmN}{\bm^N}
\newcommand{\br}{\mathbf{r}}
\newcommand{\Dx}{\Delta x}
\newcommand{\Dt}{\Delta t}
\newcommand{\jph}{j+\frac{1}{2}}
\newcommand{\jmh}{j-\frac{1}{2}}
\newcommand{\nph}{n+\frac{1}{2}}
\newcommand{\rd}{\mathrm{d}}
\newcommand{\tF}{\widetilde{F}}
\newcommand{\hHe}{\widetilde{\mathsf{He}}}
\newcommand{\cH}{\mathcal{H}}
\newcommand{\feq}{\mu}
\newcommand{\balpha}{\mathbf{\alpha}}
\newcommand{\energy}{\mathcal{E}}
\newcommand{\df}{\delta f}
\newcommand{\drho}{\delta \rho}
\newcommand{\laplace}{\mathcal{L}}
\newcommand{\basig}{\overline{\sigma}}
\newcommand{\He}{\mathsf{He}}
\newcommand{\Aop}{\mathcal{A}}
\newcommand{\normx}[1]{ \| #1 \|_{L_{x}^2}}
\newcommand{\normwx}[1]{ \| #1 \|_{w_v^{-1} L_x^2}}
\newcommand{\normw}[1]{ \| #1 \|_{w_v^{-1}}}
\newcommand{\wi}{e^{-\frac{v^2}{2}}}
\newcommand{\ww}{e^{\frac{v^2}{2}}}
\DeclareMathOperator*{\diag}{diag}
\title{Controlling instability in the Vlasov-Poisson system through moment-based optimization}
\author{Jingcheng Lu, Li Wang and Jeff Calder}
\date{}
\begin{document}
	
	\maketitle
	
	\begin{abstract}
		Controlling instability in plasma is one of the central challenges in fusion energy research. Among the various sources of instability, kinetic effects play a significant role. In this work, we aim to suppress the instability induced by kinetic effects by designing an external electric field. However, rather than directly solving the full kinetic Vlasov-Poisson system, we focus on a reduced-order model, specifically the moment-based system, to capture the underlying dynamics. This approach is motivated by the desire to reduce the computational cost associated with repeatedly solving the high-dimensional kinetic equations during the optimization of the electric field. Additionally, moment-based data is more readily accessible in practice, making a moment-based control framework more adaptable to data-driven scenarios. We investigate the effectiveness of moment-based control both analytically and numerically, by comparing it to control based on the full kinetic model.
	\end{abstract}
	
	\tableofcontents
	
	\section{Introduction}
	Consider the Vlasov-Poisson (VP) system for the motion of charged particles (e.g. electrons) in plasma state: 
	\begin{equation}\label{eq:vlasov poisson}
		\left\{
		\begin{array}{l}
			\partial_t f(\bx,\bv,t) +\bv\cdot\nabla_\bx f(\bx,\bv,t)+E(\bx,t)\cdot\nabla_\bv f(\bx,\bv,t) = 0 \,, \\
			\\
			E(\bx,t) = -\nabla_\bx\phi(\bx,t)\,, \\
			\\
			\nabla_\bx\cdot E(\bx,t) = -\Delta_\bx\phi(\bx,t) = \rho(\bx,t)-\rho_{ion} \,,\\
			\\
			\rho(\bx,t) = \int f(\bx,\bv,t) d\bv\,.
		\end{array}
		\right.
	\end{equation}
	Here $f(\bx,\bv,t)$ is particle density at location $\bx$, time $t$, with velocity $\bv$. We assume that the spatial variable $\bx$ lies in a periodic box and the velocity $\bv \in \mathbb R^d$. This setup is consistent with the modeling of plasma in fusion devices such as in the work of Tokamaks  \cite{degrave2022magnetic}.  
	Here, $E(\bx,t)$ is the self-generated electric field, obtained as the negative gradient of a potential function $\phi(\bx)$, which is determined by the charge density $\rho(\bx,t)$ through the Poisson equation in the third line of \eqref{eq:vlasov poisson}. The variable $\rho_{ion}$ is the number density of ions, which neutralize the plasma.   
	
	The VP system has been a rich subject of mathematical analysis, with significant work devoted to well-posedness and quantitative properties of  solutions \cite{rein2007collisionless}. Global well-posedness of classical solutions for large initial data was established in concurrent foundational works by Lions and Perthame \cite{lions1991propagation} and Pfaffelmoser \cite{pfaffelmoser1992global}, with different techniques. Among the properties of the solutions, one intriguing aspect is the stability (or instability) of the equilibrium upon perturbation. In particular, it is direct to verify that any function depending solely on $\bv$, i.e., $\mu(\bv)$, constitutes an equilibrium of \eqref{eq:vlasov poisson}, provided the neutrality condition
	\[
	\rho_{ion} = \int_{\mathbb R^d} \mu (\bv) \rd \bv
	\]
	is satisfied.  
	
	When $\mu(\bv)$ is a Maxwellian distribution, i.e., $\mu(\bv) \propto e^{-{|\bv-u|^2/2T}}$ for some given bulk velocity $u$ and temperature $T$, it is well-known that this equilibrium is stable, in the sense that small perturbations in plasma beams lead to the damping of electrostatic waves, a phenomenon known as Landau damping \cite{Landau, PhysRevLett.13.184, mouhot2011landau}. In contrast, when $\mu(\bv)$ is a mixture of two Gaussians, the equilibrium becomes unstable; that is, an initial disturbance in the beams is amplified, leading to undesirable growth of the perturbation and the transfer of energy from the plasma beams to electrostatic waves. If the two Gaussians have comparable magnitudes and are centered opposite about the origin, the resulting instability is referred to as the two-stream instability \cite{chen2016}. Alternatively, if the Gaussians have disparate magnitudes, with the stronger one concentrated near the origin and the weaker one at a larger velocity, the configuration is often termed a bump-on-tail distribution, which is likewise unstable.
	We also note that \eqref{eq:vlasov poisson} can admit spatially inhomogeneous equilibrium, which may also be unstable \cite{guo1995instability}. However, in this work, we focus exclusively on spatially homogeneous equilibria of the form  $\mu(\bv)$.

	These instabilities can pose significant challenges in plasma applications. For instance, the two-stream instability, commonly observed in particle accelerators, can lead to unwanted beam scattering and loss of focus \cite{sydorenko2016effect}. Similarly, in fusion devices, the bump-on-tail instability can arise due to runaway electrons or radiofrequency heating, resulting in a degradation of plasma heating efficiency. It can also cause loss of confinement and the deposition of large amounts of energy onto the reactor walls. Therefore, it is desirable to control these instabilities through carefully designed external forces. 
	
	The controllability of VP system  was first investigated by Glass and Han-Kwan in~\cite{glass2003controllability,glass2012controllability}, where the control is introduced as an external force term. They showed that, for a given initial condition $f_0(\bx,\bv)$, there exists a control that steers the solution to a prescribed final profile 
	$f_1(\bx,\bv)$ at time $t=T$. Later, in a series of papers~\cite{knopf2018optimal,knopf2019confined,knopf2020optimal}, Knopf and collaborators studied a similar problem: steering the plasma shape at the final time $t=T$ by tuning an external magnetic field. Their main tool was the calculus of variations, through which they analyzed the properties of the control-to-solution map and established necessary and sufficient conditions for the existence of an optimal control. More recently, in \cite{bartsch2023controlling}, they also addressed the problem numerically using a Monte Carlo-based particle-in-cell method. Additionally, Albi et al.~\cite{albi2025instantaneous} recently investigated instantaneous control strategies within the same problem setting.
	
	Another approach aimed at suppressing instability was recently explored in \cite{einkemmer2024suppressing}, where an external electric field is applied to stabilize the plasma beam and maintain its initial shape. More specifically, replacing $E$ in \eqref{eq:vlasov poisson} with $E + H$, where $H$ is the external control field, then the goal is to find an optimal $H$ such that the deviation of $f$ from the equilibrium $\mu$ at time $t=T$ is as small as possible: 
	\begin{align} \label{obj}
		\min_{H\in L^2_{x,t}} ||f(\cdot,\cdot,T;H)-\feq(\cdot)||_{L^2_{x,v}}\,.
	\end{align}
	In \cite{einkemmer2024suppressing}, this question is addressed within the PDE-constrained optimization framework. The authors first parameterize $H$ via $H(\bx,t;\balpha) = \sum_k \alpha_k\psi_k(\bx)$, and then determine the coefficients through a hybrid of local and global optimizers. The resulting time independent control is shown to effectively suppress plasma perturbation within the time interval $[0, T]$, but its effectiveness may diminish beyond $T$.
	Subsequently, the same problem was revisited using a linear stability analysis approach \cite{einkemmer2024control}, which leads to the identification of an explicit, time-dependent control based on the precise form of the initial instability.
	
	In this paper, we re-examine the problem \eqref{obj}, but instead of using the full VP system \eqref{eq:vlasov poisson} as the constrained equation, we employ a reduced-order surrogate, namely the hydrodynamic or, more generally, the moment system. This approach is motivated by the concern that optimizing against the full kinetic system \eqref{eq:vlasov poisson} may be intractable due to the curse of dimensionality. Instead, assessing the validity of the control at the hydrodynamic level offers a more computationally efficient approach. Additionally, in real applications, direct data on the particle distribution is often difficult to obtain, whereas moment-based data is more accessible. Therefore, a moment-based control framework is more easily adaptable to a data-driven scenario.

	To facilitate a convenient comparison between kinetic and moment control, we focus on the one-dimensional setting, in which case the VP system \eqref{eq:vlasov poisson} reduces to
	\begin{equation}\label{eq:1d vlasov poisson}
		\left\{
		\begin{array}{l}
			\partial_t f +v\partial_x f+(E+H)\partial_v f = 0 \,, \\
			\\
			E = -\partial_x\phi, \quad \partial_x E = -\partial^2_x\phi = \rho-\rho_{ion} \,, \quad 
			\rho = \int f \rd v\,.
		\end{array}
		\right.
	\end{equation}
	The first three moments of $f$ yield macroscopic quantities defined as:  
	\begin{equation}\label{eq:hydro mom}
		\begin{split}
			\rho(x,t) &:= \int_\R f(x,v,t) \rd v, \\
			\rho u(x,t) &:= \int_\R vf(x,v,t)\rd v,\\
			\energy(x,t) &:= \int_
			R\frac{v^2}{2}f(x,v,t)\rd v = \frac{1}{2}p+\frac{1}{2}\rho u^2, ~~ \text{where}~ p(x,t) := \int_\R (v-u)^2 f(x,v,t) \rd v\,.
		\end{split}
	\end{equation}
	Integration of \eqref{eq:1d vlasov poisson} leads to the  Euler-Poisson equations:
	\begin{equation}\label{eq:Euler Poisson}
		\left\{
		\begin{array}{l}
			\partial_t \rho +\partial_x(\rho u) = 0 \,, \\
			\\
			\partial_t \rho u +\partial_x(\rho u^2+p) = \rho (E+H) \,,\\
			\\
			\partial_t \energy+\partial_x\big((\energy+p)u+q\big) = 0 \,,\\
			\\
			E = -\partial_x\phi, \quad \partial_xE = -\partial^2_x\phi = \rho-\rho_{ion}\,.
		\end{array}
		\right.
	\end{equation}
	Here, the heat flux $\displaystyle q(x,t) = \int \frac{(v-u)^3}{2}f(x,v,t)\rd v$ is associated with the third moment of particle distribution $f$. It can be expressed by $\rho$, $u$, $\energy$ (or $p$) by assuming $f$ takes some equilibrium form, in which case \eqref{eq:Euler Poisson} is then closed.
	
	In general, stability in the fluid regime does not necessarily imply stability when kinetic effects are considered \cite{vogman2020two,chen2016}. This is because the VP system \eqref{eq:vlasov poisson} exhibits much more complex dynamics, where even spatially homogeneous equilibrium distributions can become unstable, as described above. Therefore, rather than applying control to \eqref{eq:Euler Poisson}, we propose considering a higher-order moment system and investigating its effectiveness as the number of moments increases. 
	
	More particularly, we consider the following control problem: 
	\begin{equation*}
		\begin{split}
			&\min_{H} \ \frac{1}{2}\sum^{\infty}_{n=0}||m_n(T;H)-\overline{m}_n||^2_2,\\
			\\
			s.t.\quad &  \partial_t m_n+\sqrt{n}\partial_x m_{n-1}+\sqrt{n+1}\partial_x m_{n+1} = \sqrt{n}(E+H)m_{n-1}, \quad n = 0,1,2\dots.
		\end{split}
	\end{equation*}
	where $m_n(t; H)$ and $\bar m_n$ are the moments of the solution $f(t,\cdot, T; H)$ and equilibrium $\mu (\cdot)$, respectively, and their specific definition will be made clear in the next section. $E$ is obtained through $m_0$, see \eqref{eq:optimization with moments}.

	In practice, we truncate the moment system at some finite $N$, and denote the resulting moments as $m_n^N$, to distinguish them from the full moments $m_n$. In the next section, we introduce the moment system based on Hermite polynomials and describe a closure rule for the truncated system. Although the truncated moment system provides an approximation to the original Vlasov–Poisson system, and this approximation improves as more moments are included, the impact of truncation on the control outcome is less clear. This issue is explored in section 3, where we compare the optimized electric fields obtained under constraints from the full Vlasov–Poisson system versus those from the truncated moment system. In section 4, we outline the adjoint state method for solving the moment based control problem, and discuss the optimization strategy. Numerical examples are provided in section 5, where we demonstrate the effectiveness of moment based control for two scenarios, two-stream instability and bump-on-tail instability.

	\section{Suppressing instability with moment systems}
	In this section, we first lay out our moment systems and then reformulate the control problem as a moment system-constrained optimization. Instead of directly taking the moments of \eqref{eq:vlasov poisson}, as is done previously to lead to \eqref{eq:Euler Poisson}, we derive the moment system using Hermite polynomials in $v$. This idea dates back to the 90's, where a small finite set of orthogonal polynomials are used to approximate $f$ in $v$ as a numerical method in the Galerkin framework \cite{schumer1998vlasov, tang1993hermite, bessemoulin2022stability}.
	
	More specifically, let $f$ be the solution of the Vlasov-Poisson system \eqref{eq:1d vlasov poisson} and formally expand it using basis function $\{\mathcal H_n (v)\}$:
	\[
	f(x,v,t) = \sum^\infty_{n=0}m_n(x,t)\mathcal{H}_n(v), \quad \mathcal{H}_n(v) =  \hHe_n(v)e^{-\frac{v^2}{2}},
	\]
	where
	\[
	\displaystyle\hHe_n(v) = \sqrt{\frac{1}{\sqrt{2\pi}n!} }\He_n(v) \qquad \text{with}~~~\displaystyle \He_n(v) = (-1)^ne^{\frac{v^2}{2}}\frac{\rd^n}{\rd v^n}e^{-\frac{v^2}{2}}
	\]
	is the normalized Hermite polynomial and satisfies \footnote{For unfamiliar readers, we point out that the choice of orthogonal polynomials needs to facilitate two aspects: 1. the appropriate domain of $v$, e.g., $v\in(-\infty,\infty)$, for Boltzmann equation and Vlasov-Poisson equation, and $v\in[-1,1]$ for one-dimensional radiative transfer equation; 2. the recursion relation of the orthogonal polynomials needs to fit the structure of the kinetic equation such that after substituting the expansion into the equation, we can get a moment system with simple structure. Taking these aspects into consideration, Hermite polynomials (orthogonal on $v\in(-\infty,\infty)$) and Legendre polynomials (orthogonal on $v\in[-1,1]$) have been the conventional choice in the derivation of moment systems of high order.}
	\begin{align*}
		\displaystyle\int_\R\hHe_m(v)\hHe_n(v) w(v)\rd v = \delta_{mn}\,, \quad w(v) = e^{\frac{v^2}{2}}\,.
	\end{align*}
	Here $m_n$ is termed $n-th$ moment and can be defined by 
	\[
	m_n(x,t) = \int_\R f(x,v,t) \hHe_n(v) \rd v = \sqrt{\frac{1}{\sqrt{2\pi}n!}}\int_\R f(x,v,t)\He_n(v)\rd v.
	\]

	We recall the first few Hermite polynomials:
	\[
	\begin{split}
		&\He_0(v) = 1, 
		~~\He_1(v) = v, 
		~~\He_2(v) = v^2-1, \\
		&\He_3(v) = v^3-3v,  
		~~\He_4(v) = v^4-6v^2+3,
		~~\cdots
	\end{split}
	\]
	Following the definition in \eqref{eq:hydro mom},  we have 
	\[
	\begin{split}
		&m_0 = \sqrt{\frac{1}{\sqrt{2\pi}}}\int_\R f \rd v = \sqrt{\frac{1}{\sqrt{2\pi}}}\rho, \\
		&m_1 = \sqrt{\frac{1}{\sqrt{2\pi}}}\int_\R v f \rd v = \sqrt{\frac{1}{\sqrt{2\pi}}}\rho u, \\
		&m_2 = \sqrt{\frac{1}{2\sqrt{2\pi}}}\int_\R (v^2-1) f \rd v = \sqrt{\frac{1}{2\sqrt{2\pi}}}(2\energy-\rho) = \sqrt{\frac{1}{2\sqrt{2\pi}}}(p+\rho u^2-\rho),\\
		&m_3 = \sqrt{\frac{1}{6\sqrt{2\pi}}}\int_\R (v^3-3v)f\rd v = \sqrt{\frac{1}{6\sqrt{2\pi}}}(2q+3pu+\rho u^3-3\rho u),\\
		&\vdots
	\end{split}
	\]

	The evolution of the moments can be derived from the recursion relations of Hermite polynomials. In particular, we have 
	\[
	\begin{split}
		&\He_{n+1}(v) = v\He_n(v)-\He_n'(v),\\
		&\He_{n+1}(v) = v\He_n(v)-n\He_{n-1}(v).
	\end{split}
	\]
	Hence the normalized polynomials $\hHe_n$ satisfy:
	\begin{equation}\label{eq:hermite recursion}
		\begin{split}
			&\sqrt{n+1} \hHe_{n+1}(v) = v\hHe_n(v)-\hHe_n'(v),\\
			&\sqrt{n+1} \hHe_{n+1}(v) = v\hHe_n(v)-\sqrt{n}\hHe_{n-1}(v).
		\end{split}
	\end{equation}
	Consequently, we have 
	\[
	\begin{split}
		\partial_t f &= \sum^{\infty}_{n=0}\partial_t m_n \cH_n(v),\\
		v\partial_x f &= \sum^{\infty}_{n=0}\partial_x m_n v\cH_n(v) \\
		& = \sum^{\infty}_{n = 0} \partial_x m_n (\sqrt{n+1}\cH_{n+1}(v)+\sqrt{n}\cH_{n-1}(v))\\
		& = \sum^{\infty}_{n = 1} \sqrt{n} \partial_x m_{n-1} \cH_{n}(v)+\sum^{\infty}_{n = 0}\sqrt{n+1}\partial_x m_{n+1}\cH_{n}(v),\\
		(E+H)\partial_v f &= (E+H)\sum^{\infty}_{n=0} m_n (\hHe_n'(v)-v\hHe_n(v))e^{-\frac{v^2}{2}}\\
		&= -(E+H)\sum^{\infty}_{n=0} \sqrt{n+1}m_n \hHe_{n+1}(v)e^{-\frac{v^2}{2}}\\
		& = -(E+H)\sum^{\infty}_{n=1} \sqrt{n} m_{n-1}\cH_{n}(v).
	\end{split}
	\]
	Substituting the above expression into \eqref{eq:1d vlasov poisson} yields the corresponding moment equations \footnote{Note that the definition of moments here is different from \eqref{eq:hydro mom}. While \eqref{eq:hydro mom} has a clearer physical interpretation for lower-order moments and is often used in Euler/Navier-Stokes equations, it leads to a highly complicated convection term and thus is inconvenient when deriving equations for higher-order moments. The definition through orthogonal polynomials, due to their recursion relations, will lead to a simpler structure of moment system and is a more common choice for deriving higher-order moment equations. In fact, the two sets of moments can be converted into each other.}:
	\begin{equation}\label{eq:VP moment eqn}
		\begin{split}
			&\partial_t m_0+\partial_x m_1 = 0,\\
			&\partial_t m_1+\partial_x m_0+\sqrt{2}\partial_x m_2 = (E+H)m_0,\\
			&\partial_t m_2+\sqrt{2}\partial_x m_1+\sqrt{3}\partial_x m_3 = \sqrt{2}(E+H)m_1,\\
			&\vdots\\
			& \partial_t m_N+\sqrt{N}\partial_x m_{N-1}+\sqrt{N+1}\partial_x m_{N+1} = \sqrt{N}(E+H)m_{N-1}\,.
		\end{split}
	\end{equation}
	Let $\bm_N = (m_0,m_1,\cdots,m_N)^\top$ consists of moments up to the $N-$th order, \eqref{eq:VP moment eqn} can be written into a more compressed form: 
	\begin{equation}\label{eq:VP moment sys}
		\frac{\partial \bm_N}{\partial t}+A_N\frac{\partial \bm_N}{\partial x} = -\sqrt{N+1}\frac{\partial m_{N+1}}{\partial x}\be_{N+1}+(E+H)D_N\bm_N,
	\end{equation}
	where $\be_{N+1} = (0,0,\cdots,0,1)^\top$ is the unit vector along the $(N+1)-$th dimension, and the coefficient matrices are
	\begin{equation}\label{eq:mom convec mat}
		A_N = \left[
		\begin{array}{ccccccc}
			0 & 1 & 0 & 0&\cdots&0&0\\
			1 & 0 & \sqrt{2} & 0&\cdots&0&0\\
			0& \sqrt{2} & 0 & \sqrt{3} & \cdots&0&0\\
			\vdots &\vdots &\ddots &\ddots&\ddots& &\vdots\\
			\vdots& \vdots & &\ddots&\ddots&\ddots&\vdots\\
			0& 0& 0 &\cdots &\sqrt{N-1} &0 &\sqrt{N}\\ 
			0& 0& 0 &\cdots &0 &\sqrt{N} &0
		\end{array}
		\right],
	\end{equation}
	and 
	\begin{equation}\label{eq:mom source mat}
		D_N = \left[
		\begin{array}{ccccccc}
			0 & 0 & 0 & 0&\cdots&0&0\\
			1 & 0 & 0 & 0&\cdots&0&0\\
			0& \sqrt{2} & 0 & 0 & \cdots&0&0\\
			0& 0& \sqrt{3}& 0 &\cdots &0&0\\
			\vdots&\vdots&\vdots&\ddots&\ddots&\vdots&\vdots\\
			\vdots&\vdots&\vdots& &\ddots&0&0\\
			0& 0& 0 & \cdots&\cdots &\sqrt{N} &0
		\end{array}
		\right]\,.
	\end{equation}
	Due to the symmetry of $A_N$, the hyperbolicity of the system is guaranteed, and all eigenvalues will be non-zero when $N$ is odd\footnote{When $N$ is even, $A_N$ will have eigenvalue 0. Indeed, if we denote $A_{k:N}$ to be the tri-diagonal matrix zero diagonal values, and off diagonal elements being $\sqrt{k}$ to $\sqrt{N}$. Then one can show that $\text{det}(A_{k:N}) = k \text{det}(A_{k+2:N})$. By induction, it boils down to either a $2\times 2$ matrix $\begin{bmatrix}
			0 & \sqrt{N} \\
			\sqrt{N} & 0
		\end{bmatrix}$ when $N$ is odd or 
		$\begin{bmatrix}
			0
		\end{bmatrix}$
		when $N$ is even. 
	}.

	If we also expand the equilibrium state $\displaystyle \feq(x,v) = \sum^\infty_{n=0} \bam_n(x)\mathcal{H}_n(v)$, we can then bound the $L^2$ perturbation of $f$ against $\feq$ with moments, that is, 
	\begin{equation}\label{eq:L^2 perturb bound}
		\begin{split}
			\int\int|f(x,v,t)-\feq(x,v)|^2\rd v\rd x &= \int\int |\sum^\infty_{n=0} (m_n(x,t)-\bam_n(x))\mathcal{H}_n(v)|^2\rd v\rd x\\
			& = \int\int |\sum^\infty_{n=0} (m_n(x,t)-\bam_n(x))\hHe_n(v)|^2e^{-v^2}\rd v\rd x\\
			& \leq \int\int |\sum^\infty_{n=0} (m_n(x,t)-\bam_n(x))\hHe_n(v)|^2e^{-\frac{v^2}{2}}\rd v\rd x\\
			& = \sum^\infty_{n=0}\int\int (m_n(x,t)-\bam_n(x))^2\hHe^2_n(v)e^{-\frac{v^2}{2}}\rd v\rd x\\
			& =  \sum^\infty_{n=0} ||m_n(\cdot,t)-\bam_n(\cdot)||^2_2.
		\end{split}
	\end{equation}
	The fourth line is obtained due to the orthogonality of $\{\hHe_n(v)\}$ with respect to the weight function $\displaystyle e^{-\frac{v^2}{2}}$. This estimate indicates that if the perturbations of moments are sufficiently suppressed, then the perturbation of the underlying particle distribution will also be suppressed.

	Based on the above derivations, we summarize our control problem as follows:
	\begin{equation}\label{eq:optimization with moments}
		\begin{split}
			&\min_{\alpha} \ \frac{1}{2}\sum^{\infty}_{n=0}||m_n(T;H)-\overline{m}_n||^2_2,\\
			\\
			s.t.\quad &  \partial_t m_k+\sqrt{k}\partial_x m_{k-1}+\sqrt{k+1}\partial_x m_{k+1} = \sqrt{k}(E+H)m_{k-1}, \quad k = 0,1,2\dots\,;
			\\ & E = -\partial_x\phi, \quad \partial_x E = -\partial^2_x\phi = \rho-\rho_{ion} \,, \quad 
			\rho = (2\pi)^{1/4} m_0\,;
			\\ & H(x,t;\alpha) = \sum_k \alpha_k \psi_k(x,t).
		\end{split}
	\end{equation}
	Here $\psi_k(x,t)$ is a user specified basis that may depend on the prior knowledge of the available control in real experimental set up. 
	
	As a comparison, we also recall the original VP constrained control in \cite{einkemmer2024suppressing} as follows (see also \eqref{obj}): 
	\begin{equation}\label{eq:optimization with VP}
		\begin{split}
			&\min_{\alpha} \ \frac{1}{2}||f(T;H)-\feq||^2_{L^2_{x,v}},\\
			\\
			s.t.\quad & 
			\left\{\begin{array}{l}
				\partial_t f+v\partial_x f+(E+H)\partial_v f = 0 \\
				\\
				E = -\partial_x \phi, \quad  -\partial^2_x\phi  = \rho-\rho_{ion}, \quad \rho(x,t) = \int f(x,v,t)\rd v,\\
				\\
				H(x,t;\balpha) = \sum_k \alpha_k \psi_k(x,t).
			\end{array}
			\right.
		\end{split}
	\end{equation}

	As written, directly solving the problem \eqref{eq:optimization with moments} faces two main challenges:
	
	\noindent\textbf{1.} The objective function is expressed as an {\it infinite} sum of moment perturbations, which is not accessible in practical computations.
	
	\noindent\textbf{2.} The original Vlasov-Poisson dynamics corresponds to an \emph{infinite} set of moment equations. However, in practical applications, the moment system is truncated at a finite order, leading to the closure problem for the highest moment.

	To address the first issue, a straightforward approach is to truncate the infinite moment perturbations at order $N$. Naturally, including more moments in the cost function ensures that the perturbation of the particle distribution is more effectively controlled by the moment perturbations. The moment closure problem, however, is more challenging to resolve, as an analytical closure is often unavailable in complex applications, and data-driven approaches are sometimes required \cite{huang2022machine1}. In the context of perturbation suppression, however, a reasonable choice for closure is to use the target equilibrium, that is, 
	\[
	\partial_x m_{N+1}\approx \partial_x \overline{m}_{N+1},
	\]
	with $\overline{m}_{N+1}(x) = \int \feq(x,v) \hHe_{N+1}(v) \rd v$. This makes sense because, when the initial perturbation is not too large and with the assistance of an appropriately introduced external field, we expect the solution to remain close to the equilibrium,  making $\partial_x \overline{m}_{N+1}$ a good approximation. 
	
	Consequently, we further approximate  \eqref{eq:optimization with moments} to arrive at our final moment-based control problem: 
	\begin{subequations}\label{eq:optimization with truncated moments}
		\begin{equation}\label{eq:optim target}
			\min_{\alpha} \ \frac{1}{2}\sum^N_{n=0} ||\mN_n(T;\alpha)-\overline{m}_n||^2_2,
		\end{equation}
		subject to the truncated moment system:
		\begin{equation}\label{eq:optim constraint}
			\left\{\begin{array}{l}
				\partial_t \bmN_N+A_N\partial_x \bmN_N+\sqrt{N+1}\partial_x \overline{m}_{N+1}\be_{N+1} = (E_N+H)D_N\bmN_N,\\
				\\
				E_N = -\partial_x \phi_N, \quad  -\partial^2_x \phi_N = \rho_N-\rho_{ion}, \quad \rho_N(x,t) = (2\pi)^{\frac{1}{4}}\mN_0(x,t),\\
				\\
				H(x,t;\alpha) = \sum_k \alpha_k \psi_k(x,t).
			\end{array}
			\right.
		\end{equation}
	\end{subequations}
	Here $\bmN_N = (\mN_0,\mN_1,\cdots,\mN_N)$ is the vector of approximate moments. In particular, when a spatially homogeneous equilibrium is considered, we have $\partial_x \overline{m}_{N+1} = 0$.

	\section{Multiscale analysis of the truncated system}
	
	This section is devoted to investigating the connections between the kinetic-based control \eqref{eq:optimization with VP} and moment-based control \eqref{eq:optimization with truncated moments}, particularly the relationship between their respective optimizers $H_{VP}$ and $H_{mom}$.
	
	In general, the difference between $H_{VP}$ and $H_{mom}$ originates from two main sources: 
	
	\noindent\textbf{1. Target error}: the discrepancy between the distribution perturbation $\displaystyle \frac{1}{2}||f-\feq||^2_{L_{x,v}^2}$ 
	and the moment perturbations up to order $\displaystyle\frac{1}{2}\sum^N_{n=0}||\mN_n-\bam_n||^2_{L_x^2}$; 
	
	\noindent\textbf{2. Model error}: the deviation between the exact Vlasov–Poisson system and the truncated moment system \eqref{eq:optim constraint}.

	\subsection{Target error}
	The target error arises from the truncation of the infinite series $\sum_{n=0}^\infty \normx{m_n- \bar m_n}^2 = \normwx{f-\mu}^2$ to a finite sum. This error generally depends on the regularity of the function.   Since the truncation occurs only in the $v$-variable, we first present the following proposition for functions depending solely on $v$. The extension to functions depending on both $x$ and $v$ is straightforward.
	
	\begin{proposition} \label{prop1}
		Consider a function $g(v)$ such that $\normw{g} < +\infty$. Expand $g$ as 
		\begin{align}
			g = \sum_{l=0}^\infty \hat g_l \hHe_l(v)\wi, \quad \text{where}~~ \hat g_l = \int_\R g(v) \hHe_l(v) \rd v\,.
		\end{align}
		Then define an operator $\Aop g := \partial_v g + v g$, we have:
		\begin{align}
			\sum_{l=N+1}^\infty |\hat g_l|^2 \lesssim N^{-k} \normw{\Aop^k g}^2\,.
		\end{align}
	\end{proposition}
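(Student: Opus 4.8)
The plan is to translate the claim entirely into Hermite coefficients and reduce it to two applications of Plancherel's identity. The functions $\hHe_l\wi$ form an orthonormal system for the $w_v^{-1}$-weighted inner product: the factor $\ww$ cancels the Gaussians in $(\hHe_l\wi)(\hHe_m\wi)$ and leaves the standard Hermite orthogonality $\int_\R \hHe_l\hHe_m\wi\,\rd v = \delta_{lm}$. Hence the coefficients $\hat g_l = \int_\R g\,\hHe_l\,\rd v$ obey $\normw{g}^2 = \sum_{l=0}^\infty |\hat g_l|^2$, and the tail $\sum_{l>N}|\hat g_l|^2$ is exactly the truncation error we must bound. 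The whole argument rests on showing that each application of $\Aop$ multiplies a high-index coefficient by a factor of order $\sqrt{l}$, so that $k$ applications amplify $\hat g_l$ by $\sim l^{k/2}$ and finiteness of $\normw{\Aop^k g}$ forces the tail to decay like $N^{-k}$.

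The key step is the coefficient recursion. Writing $(\widehat{\Aop g})_m = \int_\R (\partial_v g + vg)\hHe_m\,\rd v$ and integrating the first term by parts gives $(\widehat{\Aop g})_m = \int_\R g\,(v\hHe_m - \hHe_m')\,\rd v$; the first identity in \eqref{eq:hermite recursion}, $v\hHe_m - \hHe_m' = \sqrt{m+1}\,\hHe_{m+1}$, then collapses this to the ladder relation $(\widehat{\Aop g})_m = \sqrt{m+1}\,\hat g_{m+1}$. Iterating $k$ times yields $(\widehat{\Aop^k g})_m = \sqrt{(m+1)(m+2)\cdots(m+k)}\,\hat g_{m+k} = \sqrt{(m+k)!/m!}\,\hat g_{m+k}$, and a second use of Plancherel gives the weighted identity $\normw{\Aop^k g}^2 = \sum_{l\ge k}\frac{l!}{(l-k)!}|\hat g_l|^2$.

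Given this identity the conclusion is immediate. For every $l > N$ with $N \ge k$, the coefficient weight is bounded below by its value at $l = N+1$, namely $\frac{l!}{(l-k)!} = l(l-1)\cdots(l-k+1) \ge (N+1)N\cdots(N-k+2)$, so
\[
\sum_{l>N}|\hat g_l|^2 \le \big((N+1)N\cdots(N-k+2)\big)^{-1}\sum_{l>N}\frac{l!}{(l-k)!}|\hat g_l|^2 \le \big((N+1)N\cdots(N-k+2)\big)^{-1}\normw{\Aop^k g}^2,
\]
and since the product of $k$ factors is $\gtrsim N^k$ for large $N$, the right-hand side is $\lesssim N^{-k}\normw{\Aop^k g}^2$ with a constant depending only on $k$. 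The main technical obstacle is making the coefficient recursion rigorous: the integration by parts discards a boundary term $g\,\hHe_m\big|_{-\infty}^{\infty}$ that is not pointwise controlled by the $L^2$-type hypothesis alone. I would handle this by first establishing the recursion on a dense class (finite Hermite expansions, or Schwartz functions, where the boundary term manifestly vanishes) and extending by continuity of both sides in the graph norm of $\Aop$; and I would note that whenever $\normw{\Aop^k g} = +\infty$ the asserted inequality holds trivially, so only the case of finite right-hand side — where every series above converges and Plancherel applies verbatim — actually needs this justification.
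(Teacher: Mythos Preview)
Your proof is correct and follows essentially the same route as the paper: both establish the ladder relation $\hat g_n = n^{-1/2}(\widehat{\Aop g})_{n-1}$ via integration by parts (you use the recursion $v\hHe_m - \hHe_m' = \sqrt{m+1}\,\hHe_{m+1}$ directly, the paper routes through the Sturm--Liouville identity $(e^{-v^2/2}\hHe_n')' = -n\,e^{-v^2/2}\hHe_n$ together with $\hHe_n' = \sqrt{n}\,\hHe_{n-1}$, which amounts to the same thing), iterate it $k$ times to obtain $|\hat g_l|^2 = \frac{(l-k)!}{l!}\,|(\widehat{\Aop^k g})_{l-k}|^2$, and then bound the tail by $\max_{l>N}\frac{(l-k)!}{l!}\cdot\normw{\Aop^k g}^2 \lesssim N^{-k}\normw{\Aop^k g}^2$ via Parseval. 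Your added remark on the boundary term and the density argument is a welcome point of rigor that the paper leaves implicit.
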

	
	This proposition can be viewed as a variation of \cite[Theorem 7.13]{shen2011spectral} or \cite[Lemma 2.5]{guo1999error}, with the main difference being how we define the moments (or spectral coefficients). We also note that the bound we derive imposes a rather strong condition on $g$. Specifically, 
	$g$ must not only be sufficiently regular, as required by the repeated application of the operator $\Aop$, but it also needs to exhibit sufficient decay at the tail. This is because the 
	$k$-th weighted moments of $g$, weighted by the rapidly growing function $\ww$ must be bounded. One example of such a function 
	$g$ that satisfies this condition is a Gaussian distribution.
	
	\begin{proof}
		Note first that $\hHe$ satisfies the following relation
		$
		\frac{\rd }{\rd v} ( \wi \frac{\rd}{\rd v} \hHe_n) = -\lambda_n \wi \hHe_n$ with $ \lambda_n = n $, we have
		\begin{align*}
			\int_\R g(v) \hHe_n(v) \rd v &= -\frac{1}{\lambda_n} \int_\R g(v) \ww  \frac{\rd }{\rd v} ( \wi \frac{\rd}{\rd v} \hHe_n) \rd v
			\\ & = \frac{1}{\lambda_n} \int_\R \frac{\rd }{\rd v} (g(v) \ww )  ( \wi \frac{\rd}{\rd v} \hHe_n) \rd v
			\\ & = \frac{1}{\lambda_n} \int_\R  \Aop g \frac{\sqrt{c_{n-1}}}{\sqrt{c_n} } \lambda_n \hHe_{n-1} \rd v\,, \qquad c_n := n! \sqrt{2\pi}
		\end{align*}
		where the last equality uses the fact $\sqrt{c_n} \hHe'(v) = \lambda_n \sqrt{c_{n-1}} \hHe_{n-1}(v)$. Repeat the above process we get 
		\begin{align} \label{0614}
			g_n = \int_\R g(v) \hHe_n(v) \rd v &= \frac{\sqrt{c_{n-k}}}{\sqrt{c_n}} \int_\R \Aop^k g \hHe_{n-k} \rd v \,.
		\end{align}
		Consequently, 
		\begin{align*}
			\sum_{l=N+1}^\infty |\hat g_l|^2  & = \sum_{l=N+1}^\infty \frac{c_{l-k}}{c_l} \left( \int_\R \Aop^k g \hHe_{l-k} \rd v \right)^2 
			\\ & \leq \max_{l \geq N+1, l \geq k} \frac{c_{l-k}}{c_l} \sum_{l=N+1}^\infty \left( \int_\R \Aop^k g \hHe_{l-k} \rd v \right)^2 \lesssim N^{-k} \normw{\Aop^k g}^2\,.
		\end{align*}
		The last inequality is obtained by noticing that $\displaystyle ||g||^2_{\omega_v^{-1}} = \sum^{\infty}_{n=0}|\hat g_n|^2 = \sum^{\infty}_{n=0}\left(\int_\R g(v)\hHe_n(v)dv\right)^2$.
	\end{proof}
	
	As an immediate consequence of Proposition~\ref{prop1}, we have
	\begin{corollary} \label{col}
		Let $m_n(x,t) = \int_\R f(x,v,t) \hHe_n(v) \rd v$ and $\bar m_n = \int_\R \mu(v) \hHe_n(v) \rd v$, then 
		\begin{align*}
			\sum_{n={N+1}}^\infty \normx{m_n- \bam_n}^2 \lesssim N^{-k} \normwx{\Aop^k(f - \mu)}^2\,.
		\end{align*}
	\end{corollary}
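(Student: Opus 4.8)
The plan is to apply Proposition~\ref{prop1} pointwise in the spatial variable $x$ (with $t$ held fixed as a parameter) and then integrate the resulting estimate over $x$. First I would fix $x$ and $t$ and set $g(v) := f(x,v,t)-\mu(v)$. Since the operator $\Aop = \partial_v + v$ acts only in the $v$-variable, it commutes with freezing $x$, so the hypotheses of Proposition~\ref{prop1} apply to this $g$ at (almost) every $x$, provided $\normw{\Aop^k(f(x,\cdot,t)-\mu)} < +\infty$ for a.e.\ $x$. Its spectral coefficients are precisely the moment differences,
\[
\hat g_n = \int_\R \big(f(x,v,t)-\mu(v)\big)\hHe_n(v)\,\rd v = m_n(x,t) - \bam_n\,,
\]
where I have used that $\mu$ depends only on $v$, so that $\bam_n = \int_\R \mu(v)\hHe_n(v)\,\rd v$ is a constant.

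Proposition~\ref{prop1} then yields, for each $x$,
\[
\sum_{n=N+1}^\infty |m_n(x,t)-\bam_n|^2 \;\lesssim\; N^{-k}\,\normw{\Aop^k(f(x,\cdot,t)-\mu)}^2\,,
\]
with an implied constant that is independent of $x$. This uniformity is the one point that genuinely needs checking, but it is immediate from the proof of the proposition: the constant there is $\max_{l\ge N+1,\,l\ge k} c_{l-k}/c_l$, which depends only on $N$ and $k$ and not on the particular function to which the proposition is applied.

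Finally I would integrate this inequality in $x$. On the left, Tonelli's theorem permits interchanging the (nonnegative) sum over $n$ with the $x$-integration, producing $\sum_{n=N+1}^\infty \normx{m_n(\cdot,t)-\bam_n}^2$; on the right, the $v$- and $x$-integrals assemble into $\normwx{\Aop^k(f-\mu)}^2$, giving exactly the claimed bound. In short, the corollary is a Fubini-type lift of the one-variable estimate, and I do not anticipate any real obstacle beyond recording the uniformity of the constant and the measurability needed to apply Tonelli.
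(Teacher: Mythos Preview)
Your proposal is correct and matches the paper's approach: the paper states the corollary as an ``immediate consequence'' of Proposition~\ref{prop1} without further argument, and the pointwise-in-$x$ application followed by integration (with Tonelli and the observation that the constant is independent of $x$) is precisely how one makes that immediacy rigorous.
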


	\subsection{Model error}
	To quantify the model error, we propose the following auxiliary function $f_N$, which maps the moment information to an approximate distribution function:
	\begin{equation}\label{eq:fN near-equi}
		f_N(x,v,t) = \feq(v)+\sum^{N}_{n=0}(\mN_n(x,t)-\bam_n)\cH_n(v).
	\end{equation}
	Clearly, following the lines of \eqref{eq:L^2 perturb bound}, we have
	\begin{equation}\label{eq:f_N L^2 bound}
		\frac{1}{2}||f_N-\mu||^2_{L^2_{x,v}} = \frac{1}{2} ||\sum^{N}_{n=0}(\mN_n-\bam_n)\cH_n||^2_{L^2_{x,v}}\leq \frac{1}{2}\sum^{N}_{n=0}||\mN_n-\bam_n||^2_2\,.
	\end{equation}
	Notably, the upper bound on the right-hand side corresponds exactly to the objective function \eqref{eq:optim target} in the moment-based optimization. This implies that the optimized external field, $H_{mom}$,  constrained by the truncated moment system, effectively suppresses the perturbation of the approximate distribution 
	$f_N$ from the equilibrium $\mu$. 
	
	To characterize the evolution of $f_N$ under ansatz \eqref{eq:fN near-equi}, we use the recursion \eqref{eq:hermite recursion}  and derive
	\begin{equation*}
		\begin{split}
			\partial_t f_N+v\partial_xf_N+(E_N+H)\partial_v f_N = &
			\sum^{N}_{n=0}[\partial_t\mN_n+\sqrt{n}\partial_x\mN_{n-1}+\sqrt{n+1}\partial_x\mN_{n+1}-(E_N+H)\sqrt{n}\mN_{n-1}]\cH_n\\
			&+\sqrt{N+1}\partial_x \mN_N\cH_{N+1}-(E_N+H)\sqrt{N+1}\mN_N\cH_{N+1}\\
			& +(E_N+H)\partial_v\big(\feq-\sum^{N}_{n=0}\bam_n\cH_{n}\big)\\
			=& \sqrt{N+1}\partial_x \mN_N\cH_{N+1}-(E_N+H)\sqrt{N+1}\mN_N\cH_{N+1}\\
			& +(E_N+H)\partial_v\big(\feq-\sum^{N}_{n=0}\bam_n\cH_{n}\big)
		\end{split}
	\end{equation*}
	Using the relation $\sqrt{n+1}\cH_{n+1}(v) = -\cH_{n}'(v)$, the above equation can be simplified to:
	\begin{equation}\label{eq:truncated kinetic eqn}
		\left\{\begin{array}{l}
			\partial_t f_N+v\partial_x f_N+(E_N+H)\partial_v(\displaystyle \sum^{N-1}_{n=0}\mN_n\cH_{n}) = \sqrt{N+1}\partial_x\mN_N\cH_{N+1},\\
			\\
			E_N = -\partial_x\phi_N, \quad -\partial^2_x\phi_N = \rho_N-\rho_{ion}\\
			\\
			\rho_N = \int f_N \rd v.
		\end{array}
		\right.
	\end{equation}
	In this way, the dynamics described by \eqref{eq:truncated kinetic eqn} provide an equivalent kinetic representation of the moment system \eqref{eq:optim constraint}, enabling a direct comparison with the exact Vlasov–Poisson system. 
	
	More specifically, we compare the two control problems:
	\begin{itemize}
		\item[1)] Our moment control: $\min_{H_{mom}} \|f_N - \mu\|^2$ subject to \eqref{eq:truncated kinetic eqn};
		\item[2)] Original control: $\min_{H_{VP}} \|f - \mu\|^2$ subject to \eqref{eq:1d vlasov poisson}.
	\end{itemize}
	For this purpose, we adopt a linearized approach, following the methodology used in the derivation of the Penrose condition~\cite{penrose1960electrostatic}, the proposal of Landau damping by Landau~\cite{Landau}, and, more recently, the work of Einkemmer et al.~\cite{einkemmer2024control} in designing controlled electric fields.

	Let $f$ and $f_N$ be the particle distributions in the true physics \eqref{eq:1d vlasov poisson} and the moment based system \eqref{eq:truncated kinetic eqn}, respectively. We assume $f$ and $f_N$ are small perturbations around the equilibrium $\mu$ as: 
	\[
	\df := f-\feq\ll \feq, \quad \df_N := f_N-\feq\ll \feq, \quad \mN_n-\bam_n\ll\bam_n.
	\]
	Then \eqref{eq:1d vlasov poisson} and \eqref{eq:truncated kinetic eqn} can be linearized into 
	\begin{equation}\label{eq:linearized vp}
		\partial_t \df+v\partial_x \df+(E+H)\partial_v \feq =0,
	\end{equation}
	and 
	\begin{equation}\label{eq:linearized truncated kinetic eqn}
		\partial_t \df_N+v\partial_x \df_N+(E_N+H)\partial_v \feq_{N-1} = \sqrt{N+1}\partial_x \mN_N\cH_{N+1}, \quad \feq_{N-1}(v) = \sum^{N-1}_{n=0}\bam_n\cH_n(v).
	\end{equation}
	Likewise, we define the perturbation densities,
	\[
	\drho(x,t) = \int \df(x,v,t)\rd v, \quad \drho_N(x,t) = \int \df_N(x,v,t)\rd v,
	\]
	and, for convenience of discussion, assume that 
	\[
	\df(x,v,0) = \df_N(x,v,0) = \df_0(x,v).
	\]
	
	By performing Fourier and Laplace transforms on equation \eqref{eq:linearized vp}, it was shown in \cite[Appendix B]{einkemmer2024control} that the density perturbation can be described with
	\begin{equation}\label{eq:vp linearized perturb}
		\laplace[\widehat{\drho}(\xi,\cdot)](s) = \laplace[\widehat{\drho_F}(\xi,\cdot)](s)-\frac{i\xi\laplace[\widehat{H}(\xi,\cdot)](s)+\laplace[\widehat{\drho_F}(\xi,\cdot)](s)}{1+\laplace[\widehat{U}(\xi,\cdot)](s)}\laplace[\widehat{U}(\xi,\cdot)](s),
	\end{equation}
	where $\widehat{\cdot}$ represents Fourier transform in space, $\laplace[\cdot]$ represents Laplace transform in time. The function $U(x,t)$ satisfies
	\begin{equation} \label{hatU}
		\widehat{U}(\xi,t) = t\widehat{\mu}(t\xi),
	\end{equation}
	and 
	\begin{align} \label{drhoF}
		\drho_F(x,t) = \int \df_F(x,v,t)\rd v
	\end{align}
	is the free stream perturbation density where $\df_F(x,v,t) = \df_0(x-vt,v)$ satisfies the free stream transport $\partial_t \df_F+v\partial_x\df_F = 0$.
	Applying similar derivations to \eqref{eq:linearized truncated kinetic eqn}, we obtain
	\begin{equation}\label{eq:truncated kinetic linearized perturb}
		\begin{split}
			\laplace[\widehat{\drho_N}(\xi,\cdot)](s) = \laplace[\widehat{\drho_F}(\xi,\cdot)](s)&-\frac{i\xi\laplace[\widehat{H}(\xi,\cdot)](s)+\laplace[\widehat{\drho_F}(\xi,\cdot)](s)}{1+\laplace[\widehat{U_{N-1}}(\xi,\cdot)](s)}\laplace[\widehat{U_{N-1}}(\xi,\cdot)](s)\\
			&+\frac{i\xi\sqrt{N+1}\laplace[\widehat{\mN_N}(\xi,\cdot)](s)}{1+\laplace[\widehat{U_{N-1}}(\xi,\cdot)](s)}\laplace[\widehat{\cH}_{N+1}(\xi\cdot)](s),
		\end{split}
	\end{equation}
	where 
	\[
	\widehat{U_{N-1}}(\xi,t) = t\widehat{\mu_{N-1}}(t\xi) = t\sum^{N-1}_{n = 0} \bam_n\widehat{\cH}_{n}(t\xi), \quad  \widehat{\cH}_{N+1}(t\xi) = \int \cH_{N+1}(v)e^{-i(\xi t)v}dv.
	\]

	Then, if we consider choosing the external field $H$ to cancel the self-generated electric field, as suggested in \cite[Section 3.1]{einkemmer2024control}, we obtain the following result concerning the difference between the two control problems.
	\begin{proposition}
		Let $H_{mom}$ and $H_{VP}$ denote the control fields for systems \eqref{eq:truncated kinetic eqn} and \eqref{eq:1d vlasov poisson} respectively. Suppose these fields are selected to cancel the self-generated electric field, that is,
		\begin{align}
			i\xi \laplace[\widehat{H}_{Vp}] &= - \laplace[\widehat{\delta \rho_F}] \label{0616}
			\\ i\xi \laplace[\widehat{H}_{mom}] &= -\laplace[\widehat{\delta \rho_F}] + \frac{i\xi \sqrt{N+1}}{\laplace [\widehat{U}_{N-1}]} \laplace[\widehat{m_N^N}] \laplace[\widehat{\mathcal{H}}_{N+1}] \label{0617}
		\end{align}
		where $\delta \rho_F$ is defined in \eqref{drhoF}, and $\laplace[\widehat{H}]$ is a short notation for $\laplace[\widehat{H}(\xi,\cdot)](s)$. Then, if the equilibrium $\widehat U$ (corresponding to $\mu$ by \eqref{hatU}) has the property that $|\laplace[\widehat U_{N-1}]|$ is bounded away from zero independent of $N$, the difference between the two control fields satisfies
		\begin{align} \label{0618}
			|\laplace[\widehat{H}_{mom}] - \laplace[\widehat{H}_{VP}]| =  \left|\frac{\laplace[\widehat{\mathcal{H}}_{N+1}]  }{\laplace [\widehat{U}_{N-1}]} \sqrt{N+1} \laplace[ \widehat{m_N^N}] \right| \lesssim \frac{\sqrt{N+1}}{N^{k/2}} \normwx{\Aop^k f_N}\,, 
		\end{align}
		where $\Aop$ is again defined by $\Aop g := \partial_v g + vg$. 
		Additionally, if we use the $\widehat H_{mom}$ obtained from moment control system to control the original Vlasov-Poisson system, we obtain the following perturbation:
		\[
		\laplace[\widehat{\drho}] = \laplace[\widehat{\drho_F}]-
		\frac{\laplace[\widehat U]}{1+ \laplace[\widehat U]} \frac{i\xi \laplace[\widehat{\mathcal H}_N] }{\laplace[\widehat U_{N-1}]}\sqrt{N+1} \laplace[\widehat{m_N^N}].
		\]
	\end{proposition}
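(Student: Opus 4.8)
The identity (the first equality in \eqref{0618}) is immediate algebra and I would dispose of it first: subtracting \eqref{0616} from \eqref{0617} gives $i\xi\big(\laplace[\widehat H_{mom}]-\laplace[\widehat H_{VP}]\big)=\frac{i\xi\sqrt{N+1}}{\laplace[\widehat U_{N-1}]}\laplace[\widehat{m_N^N}]\,\laplace[\widehat{\mathcal H}_{N+1}]$, and cancelling the common factor $i\xi$ and taking moduli produces the stated expression. All the real content is therefore in the inequality, and my plan is to split the right-hand side into the three factors $\big|\laplace[\widehat U_{N-1}]\big|^{-1}$, $\big|\laplace[\widehat{\mathcal H}_{N+1}]\big|$, and $\sqrt{N+1}\,\big|\laplace[\widehat{m_N^N}]\big|$, and to bound each one separately and uniformly in $N$.

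The first factor is $\lesssim 1$ directly from the standing hypothesis that $|\laplace[\widehat U_{N-1}]|$ is bounded below independently of $N$. For the second factor I would prove the uniform bound $\big|\laplace[\widehat{\mathcal H}_{N+1}]\big|\lesssim 1$. The point is that, although $\hHe_{N+1}$ is an unbounded polynomial, the Hermite \emph{function} $\mathcal H_{N+1}=\hHe_{N+1}\wi$ is uniformly bounded: Cramér's inequality $|\He_n(v)|\lesssim\sqrt{n!}\,e^{v^2/4}$ together with the normalization $\hHe_n=(\sqrt{2\pi}\,n!)^{-1/2}\He_n$ yields $|\mathcal H_n(v)|=(\sqrt{2\pi}\,n!)^{-1/2}|\He_n(v)|\wi\lesssim e^{-v^2/4}$ with a constant independent of $n$. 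Hence $\|\mathcal H_{N+1}\|_{L^1_v}\lesssim 1$, so $\|\widehat{\mathcal H}_{N+1}\|_\infty\lesssim 1$ and $\big|\laplace[\widehat{\mathcal H}_{N+1}](s)\big|\le\int_0^\infty|\widehat{\mathcal H}_{N+1}(\xi t)|e^{-\mathrm{Re}(s)t}\rd t\lesssim(\mathrm{Re}\,s)^{-1}$, uniformly in $N$.

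All the decay lives in the third factor, where I would recycle the computation behind Proposition~\ref{prop1}. Since the Fourier and Laplace transforms act on $x$ and $t$ while $\Aop$ acts on $v$, they commute, so $\laplace[\widehat{m_N^N}]=\int_\R\laplace[\widehat{f_N}]\,\hHe_N\,\rd v$; applying the integration-by-parts identity \eqref{0614} in the $v$-variable then gives $\laplace[\widehat{m_N^N}]=\sqrt{c_{N-k}/c_N}\int_\R\laplace[\widehat{\Aop^k f_N}]\,\hHe_{N-k}\,\rd v$. The remaining integral is a single Hermite coefficient, so Bessel's inequality bounds it by $\normw{\laplace[\widehat{\Aop^k f_N}]}$, and $c_{N-k}/c_N=(N-k)!/N!\lesssim N^{-k}$ supplies the factor $N^{-k/2}$. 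Multiplying the three factors gives \eqref{0618}. The final perturbation formula is again pure substitution: inserting \eqref{0617} into the Vlasov--Poisson identity \eqref{eq:vp linearized perturb} collapses its numerator $i\xi\laplace[\widehat H_{mom}]+\laplace[\widehat{\drho_F}]$ to $\frac{i\xi\sqrt{N+1}}{\laplace[\widehat U_{N-1}]}\laplace[\widehat{m_N^N}]\laplace[\widehat{\mathcal H}_{N+1}]$, and factoring out $\laplace[\widehat U]/(1+\laplace[\widehat U])$ reproduces the displayed expression (whose subscript on $\widehat{\mathcal H}$ should read $N+1$).

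I expect the genuine obstacle to be not any single estimate but the transform bookkeeping: making precise in what sense the right-hand norm $\normwx{\Aop^k f_N}$ is to be read --- it is really the weighted velocity norm $\normw{\cdot}$ of the \emph{transformed} quantity $\laplace[\widehat{\Aop^k f_N}](\xi,\cdot,s)$, so \eqref{0618} should be understood pointwise in $(\xi,s)$ --- and verifying that the constants hidden in $\lesssim$ depend only on $(\xi,s)$ and not on $N$. The one nontrivial analytic input is the uniform-in-$N$ control of $\laplace[\widehat{\mathcal H}_{N+1}]$ through Cramér's inequality; everything else is the Proposition~\ref{prop1} estimate in disguise, which is why I would organize the argument to lean on \eqref{0614} directly rather than re-deriving it.
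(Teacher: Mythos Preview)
Your proposal is correct and follows the same route as the paper's (terse) proof: the Cram\'er bound $|\mathcal{H}_n|\lesssim e^{-v^2/4}$ for uniform control of the Hermite-function factor, and Corollary~\ref{col} (equivalently the identity \eqref{0614} you invoke directly) for the $N^{-k/2}$ decay of the moment. Your additional remarks on the transform bookkeeping and on the subscript typo in the final display go beyond what the paper records but are well taken.
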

	\begin{proof}
		The derivation of \eqref{0616} and \eqref{0617} is straightforward from \eqref{eq:vp linearized perturb} and \eqref{eq:truncated kinetic linearized perturb}, respectively. The estimate \eqref{0618} follows from the fact that $|\mathcal{H}_{N}| \lesssim e^{-\frac{v^2}{4}}$ and the application of Corollary~\ref{col}. 
	\end{proof}

	
	\section{Implementation of moment-based optimization}
	This section is devoted to solving the moment-based control problem \eqref{eq:optimization with truncated moments}, using the classical adjoint state method. The implementation details are outlined as follows. 
	
	\subsection{Derivation of the adjoint state method}
	We consider the constraint moment system,
	\begin{equation}\label{eq:VP moment sys truncated}
		\begin{split}
			\frac{\partial \bm}{\partial t}+A\frac{\partial \bm}{\partial x}+\sqrt{N+1}\frac{\partial \overline{m}_{N+1}}{\partial x} \be_{N+1} = (E_N+H)D \bm, 
		\end{split}
	\end{equation}
	where $\bm = (\mN_0,\mN_1,\cdots,\mN_N)^\top$, $H(x,t;\alpha) = \sum_k \alpha_k\psi_k(x,t)$, and the constant coefficient matrices $A$, $D$ are given in \eqref{eq:mom convec mat} and \eqref{eq:mom source mat}. Here we omit the  $N$ in the superscripts and subscripts of $\bmN_N$, $A_N$, $D_N$ for notation simplicity.
	
	Our goal is to optimize the external field $H(x,t;\alpha)$ such that the $L^2-$moment perturbation can be sufficiently suppressed: 
	\[
	L(\{\bm(T;\alpha)\}) = \frac{1}{2}||\bm(T;\balpha)-\overline{\bm}||^2_2 = \sum^N_{n=0}\frac{1}{2}\int |m_n(x,T;\balpha)-\overline{m}_n|^2\rd x,
	\]
	where $\overline{m}_n = \int \feq(v)\hHe_n(v)\rd v$ is the $n-$th moment of the desired equilibrium $\feq$. We intend to optimize the parameters $\balpha$ using the gradient descent iterations. To this end, we introduce the Lagrangian multiplier $\lambda(x,t)$ and set
	\[
	\begin{split}
		\mathscr{L}(\{\bm(T;\balpha)\}) &= L(\{\bm(T;\balpha)\})+\int^T_0\int \lambda(x,t)^\top (\partial_t \bm+A\partial_x\bm+\sqrt{N+1}\partial_x\overline{m}_{N+1} \be_{N+1}-(E_N+H)D\bm)\rd x\rd t.
	\end{split}
	\]
	Obviously, when $\bm(x,t)$ satisfies the equation \eqref{eq:VP moment sys truncated} we have $\mathscr{L}(\{\bm(T)\}) = L(\{\bm(T)\})$ for any arbitrary choice of $\lambda$. 
	
	Taking the gradient of $\mathscr{L}$ with respect to $\a_k$, we have
	\begin{equation}\label{eq:cont adjoint ibp}
		\begin{split}
			\frac{\partial \mathscr{L}}{\partial \a_k} &= \int (\bm(x,T)-\overline{\bm})^\top\partial_{\a_k}\bm(x,T) \rd x + \int^T_0\int \lambda(x,t)^\top (\partial_t \partial_{\a_k}\bm+A\partial_x\partial_{\a_k}\bm-(E_N+H)D\partial_{\a_k}\bm)\rd x\rd t\\
			& \quad -\int^T_0\int \lambda(x,t)^\top \big(\partial_{\a_k}E_N(x,t)+\psi_k(x,t)\big) D\bm(x,t) \rd x\rd t\\
			& = \int (\bm(x,T)-\overline{\bm}+\lambda(x,T))^\top\partial_{\a_k}\bm(x,T)\rd x-\int^T_{0}\int\big\{\partial_t\lambda^\top+\partial_x\lambda^\top A+\lambda^\top(E_N+H)D\big\}\partial_{\a_k}\bm \rd x\rd t\\
			&\quad -\int^T_0\int \lambda(x,t)^\top \big(\partial_{\a_k}E_N(x,t)+\psi_k(x,t)\big) D\bm(x,t) \rd x\rd t\,,
		\end{split}
	\end{equation}
	where the second equality uses integration by parts. 
	To avoid the expensive evaluation of $\partial_{\a_k} \bm$, we let $\lambda$ solving the following space-time continuous adjoint equations
	\begin{equation}\label{eq:cont adjoint pde}
		\left\{\begin{array}{l}
			\displaystyle
			\partial_t\lambda(x,t) + A^\top \partial_x \lambda(x,t)= -\big(E_N(x,t)+H(x,t)\big)D^\top\lambda(x,t),  \\
			\\
			\lambda(x,T) = -(\bm(x,T)-\overline{\bm}). 
		\end{array}
		\right.
	\end{equation}
	Then the gradient of the loss function reduces to
	\begin{equation}\label{eq:cont adjoint gradient}
		\frac{\partial L}{\partial \a_k} = \frac{\partial \mathscr{L}}{\partial \a_k} = -\int^T_0\int \lambda(x,t)^\top \big(\partial_{\a_k}E_N(x,t)+\psi_k(x,t)\big) D\bm(x,t) \rd x\rd t.
	\end{equation}
	
	Note that in the gradient expression \eqref{eq:cont adjoint gradient}, the term $\partial_{\a_k} E_N$  is the most computationally involved. However, to better understand this term, consider its underlying structure. Let $G(x,x')$ be the Green's function to the 1D Poisson equation with appropriate boundary conditions. Then we have 
	\[
	\partial_{\a_k}E_N(x,t) = -\int \partial_x G(x,x') \partial_{\a_k}\delta \rho_N(x',t) \rd x'\,,
	\]
	where $\delta \rho_N = \rho_N-\overline{\rho}$ is the density perturbation. Hence, under the assumption of small perturbations, we have $|\partial_{\a_k}E_N(x,t)|\ll 1$ and can be safely omitted. Consequently, we will only use the following approximate gradient in the optimization: 
	\begin{equation}\label{eq:approx grad}
		\frac{\partial L}{\partial \a_k} \approx -\int^T_0\int \lambda(x,t)^\top \psi_k(x,t) D\bm(x,t) \rd x\rd t.
	\end{equation}
	Such simplification turns out to generate satisfactory results for moderate perturbations, as will be shown in section \ref{sec:numerical experiment}. When more accurate gradient calculation is needed, modern scientific computing packages such as   \texttt{torchdiffeq} can be used for numerical auto-differentiation.\footnote{For more reference, see 
		https://github.com/rtqichen/torchdiffeq}
	
	To discretize the moment system \eqref{eq:VP moment sys truncated} as well as the the adjoint equations \eqref{eq:cont adjoint pde}, we will use the semi-Lagrangian method combined with Strang splitting. 
	Since the matrix $A$ is symmetric, we can diagonalize it in the form
	\[
	A = R\Lambda R^{-1}, \quad R = [\br_0,\br_1,\cdots,\br_{N}],\quad \Lambda = \diag(\lambda_0,\lambda_1,\cdots,\lambda_{N}),
	\]
	where $\lambda_0,\dots,\lambda_{N}$ are the eigenvalues of $A$, $\br_0,\cdots,\br_{N}$ are the corresponding eigenvectors. They are obtained numerically with Matlab's \textbf{eig} solver. We define the characteristic variables 
	\[
	\bw(x,t) = R^{-1} \bm(x,t).
	\]
	Multiplying \eqref{eq:VP moment sys truncated} by $R^{-1}$ on the left yields
	\[
	\frac{\partial }{\partial t}\bw(x,t)+\Lambda\frac{\partial }{\partial x} \bw(x,t)= \tF(x,t), \quad \tF(x,t) = \big(E(x,t)+H(x,t)\big)R^{-1}D\bm(x,t),
	\]
	or equivalently, 
	\begin{align} \label{0610}
		\partial_t w_k(x,t)+\lambda_k\partial_x w_k(x,t) = \tF_k(x,t), \quad k = 0,1,\cdots,N,
	\end{align}
	where $w_k$ and $\tF_k$ are the $k-$th component of $\bw$ and $\tF$. Applying semi-Lagrangian to \eqref{0610} (or equivalently \eqref{eq:VP moment sys truncated}) with grid points $\{x_j\}$ and time step $\Dt$, we have:
	\begin{itemize}
		\item Compute 
		\[
		w^{\nph}_k(x_j) = w_k(x_j-\tfrac{\Dt}{2}\lambda_k,t^n), \ k = 0,1,\cdots N ~~\Longrightarrow ~~ \bm^{\nph}(x_j) = R\bw^{\nph}(x_j).
		\]
		\item Compute
		\[
		\bm^{n+1,*}(x_j) = \bm^{\nph}(x_j)+\Dt F^{\nph}(x_j),
		\]
		where $\displaystyle F^{\nph}(x_j) = \big(E_N^{{\nph}}(x_j)+H(x_j,t^{\nph})\big)D\bm^{\nph}(x_j)$,  $E_N^{{\nph}}(x_j)$ is computed from the density $\displaystyle\rho_N^{\nph} = (2\pi)^{\frac{1}{4}} m^{\nph}_0$. We also compute the characteristic variables 
		\[
		\bw^{n+1,*}(x) = R^{-1}\bm^{n+1,*}(x).
		\]
		\item Compute 
		\[
		w^{n+1}_k(x_j) = w^{n+1,*}_k(x_j-\tfrac{\Dt}{2}\lambda_k), \ k = 0,1,\cdots N ~~\Longrightarrow ~~\bm^{n+1}(x_j) = R\bw^{n+1}(x_j).
		\]
	\end{itemize}
	$\bm^{n+1}(x_j)$ is then taken as the approximation to $\bm(x_j,t^{n+1})$. In the first and third steps, we use linear interpolation to obtain the nodal values at $x_j-\frac{\Dt}{2}\lambda_k$. The backward integration of the adjoint PDE \eqref{eq:cont adjoint pde} can be derived in a similar manner.
	
	\subsection{Parameter update} \label{sec:momentum gradient descent}
	To update the parameters, we employ the gradient descent method with momentum to accelerate the convergence. Starting with zero momentum $w^{(0)} = 0$, we update $\balpha$ with
	\begin{equation}\label{eq:GD momentum}
		\begin{split}
			\left\{\begin{array}{l}
				w^{(i+1)} = \beta w^{(i)}+\nabla_\balpha L^{(i)}  \\
				\\
				\balpha^{(i+1)} = \balpha^{(i)} -\eta^{(i+1)} w^{(i+1)} 
			\end{array}\right., \quad i = 0,1,2, \cdots,
		\end{split}
	\end{equation}
	where the weight $\beta\in [0,1)$ measures the strength of inertia, $\eta^{(i)}$ is the learning rate at the $i-$th iteration. Such acceleration protocol dates back to Polyak's heavy ball method \cite{polyak1964some}. A large value of $\beta$ indicates that the iterations are strongly affected by the previous updates. When $\beta = 0$, it reduces to the vanilla gradient descent. 
	
	To determine the appropriate learning rate for each parameter, we further apply Jacob's scheme \cite{jacobs1988increased} for step size adaptation. Let $\alpha_k^{(i)}$ be a single weight of the cost function $L(\balpha^{(i)})$ at the $i-$th iteration, the corresponding learning rate, $\eta_k^{(i)}$, is updated with the rule
	\begin{equation}\label{eq:jacobs adaptation}
		\eta^{(i+1)}_k = \left\{\begin{array}{ll}
			\eta_k^{(i)}+\kappa &, \overline{\delta}_k^{(i-1)}\cdot \delta_k^{(i)}>0 \\
			(1-\gam)\eta_k^{(i)} &, \overline{\delta}_k^{(i-1)}\cdot \delta_k^{(i)}<0
		\end{array}
		\right., \quad \kappa>0, \quad 0<\phi<1,
	\end{equation}
	where 
	\[
	\delta_k^{(i)} = \frac{\partial L(\balpha^{(i)})}{\partial \alpha_k^{(i)}}, \ \ \text{and} \ \ \overline{\delta}^{(i)}_k = (1-\th)\delta_k^{(i)}+\th\overline{\delta}^{(i-1)}_k, \quad 0<\theta<1.
	\]
	Then the \emph{component-wise} learning rate $\eta^{(i+1)}$ in \eqref{eq:GD momentum} is the diagonal matrix with the $(k,k)-$th component equal to $\eta_k^{(i+1)}$.
	
	In our computation, we take $\beta = 0.9$, $\gam = 0.3$, $\th = 0.7$. The learning rate is uniformly initialized with $\eta^{(1)}_k = \eta_0$, where $\eta_0$ is tuned to ensure an efficient and stable convergence. The increase rate of step size $\kappa$ is set to $\eta_0/10$. The initial guess of parameters is set to $\balpha = \mathbf{0}$ for all test cases. Iterations are stopped when $||\nabla_{\balpha}L||_{\infty}<10^{-3}$ or the maximum iteration of 1000 steps is reached, whichever comes first.

	\section{Numerical experiments}\label{sec:numerical experiment}
	In this section, we present two numerical experiments to demonstrate that instabilities in plasma can be effectively controlled by the external electric field derived from the moment-based optimization problem \eqref{eq:optimization with truncated moments}. Additionally, we compare our results with those obtained by controlling instabilities through the full Vlasov-Poisson system-based optimization problem \eqref{eq:optimization with VP}.
	
	\subsection{Suppressing two-stream instability} 
	We consider first the two-stream distribution, which is known to be an unstable equilibrium:
	\[
	\mu(v) = \frac{1}{2\sqrt{2\pi}}\exp\left(-\frac{1}{2}(v-\overline{v})^2\right)+\frac{1}{2\sqrt{2\pi}}\exp\left(-\frac{1}{2}(v+\overline{v})^2\right).
	\]
	Here we take $\overline{v} = 2.4$. The perturbed initial distribution is set to
	\[
	f_0(x,v) = (1+\eps\cos(0.2 x))\mu(v), \quad (x,v)\in[0,10\pi]\times[-8,8]\,,
	\]
	where perturbation strength is $\eps = 10^{-3}$. The background ion density is set to $\rho_{ion} = 1$. Periodic boundary conditions are applied at both ends of the spatial interval. The computational domain is discretized by the uniform mesh with $N_x = 100$ and $N_v = 200$ grids in the space and velocity directions. 
	
	To suppress the plasma instability, we introduce a time-independent external field taking the form
	\begin{equation}\label{eq:H expansion}
		H(x) = \sum^K_{k = 1}\a_k \sin(\tfrac{k}{5}x)+ \sum^K_{k=0}\b_k \cos(\tfrac{k}{5}x)\,,
	\end{equation}
	where $K=10$ is chosen to be 10 in the following experiments. We then obtain the optimal $\balpha = \{\alpha_k,\beta_k\}$ by solving the approximate optimization problem \eqref{eq:optimization with truncated moments} . The approximate electric field $E_N(x,t)$ of the truncated moment system \eqref{eq:optim constraint} is solved by
	\[
	\left\{\begin{array}{l}
		E_N = -\partial_x \phi_N, \quad -\partial_x^2\phi_N = \rho_N-1, \quad \rho_N(x,t) = (2\pi)^{\frac{1}{4}}m_0(x,t)  \\
		\\
		\phi_N(0) = \phi_N(10\pi) = 0.
	\end{array}
	\right.
	\]
	In the test, we truncate \eqref{eq:optim constraint} at order $N = 30$, and the moment perturbation \eqref{eq:optim target} is minimized at terminal time $T = 30$. The time step of semi-Lagrangian schemes is decided by the following condition
	\[
	\frac{\max_{0\leq k\leq N}|\lambda_k| \Dt}{\Dx} = 3\,.
	\]
	We denote the resulting external field as $H_{mom}$.
	
	To verify the effectiveness of $H_{mom}$ in terms of controlling plasma instability as applied to the true physics model \eqref{eq:1d vlasov poisson}, we substitute $H_{mom}$ into the original Vlasov-Poisson equation and calculate the resulting particle distribution $f(x,v,t;H_{mom})$. Correspondingly,  the electric field $E(x,t)$ with respect to the VP system is calculated by
	\[
	\left\{\begin{array}{l}
		E = -\partial_x \phi, \quad -\partial_x^2\phi = \rho-1, \quad \rho(x,t) = \int f(x,v,t)\rd v,  \\
		\\
		\phi(0) = \phi(10\pi) = 0.
	\end{array}
	\right.
	\]
	In our experiments, the Vlasov-Poisson equation is discretized with the semi-Lagrangian scheme following the lines of \cite{einkemmer2024suppressing}, with the mesh size $N_x = 100$, $N_v = 200$, and the time step $\Dt = 0.1$.
	
	Figure \ref{fig:two stream perturb} shows the evolution of particle perturbation, $\displaystyle J(t) = \frac{1}{2}||f(t,\cdot,\cdot)-\feq||^2_2$, and electric energy, $\energy_e(t) = \displaystyle \frac{1}{2}\int E(x,t)^2 \rd x$, up to $t = 40$. It is seen that when the external field is absent, the initial perturbation is rapidly amplified as time develops, which leads to the deviation of $f$ from $\mu$, as well as the exponential increase in the electric energy. With the help of $H_{mom}$, such an instability is markedly reduced within the optimization time interval $t\in [0,30]$. At the terminal time of optimization $T = 30$, $H_{mom}$ suppresses the $L^2-$perturbation from $J(30)\approx 0.23$ to $2.71\times10^{-5}$, and the electric energy from $\energy_e(30) \approx 0.75$ to $1.31\times10^{-4}$. As time evolves further to $t = 40$, $H_{mom}$ can still impose effective control on the instability, reducing $J(40)$ from $0.45$ to $6.96\times10^{-4}$ and $\energy_e(40)$ from $1.78$ to $2.86\times 10^{-3}$. The suppression of perturbation can also be seen from the particle distributions shown in Figure \ref{fig:two stream f}.
	\begin{figure}[h!]
		\centering
		\begin{subfigure}{0.4\textwidth}
			\includegraphics[scale = 0.25]{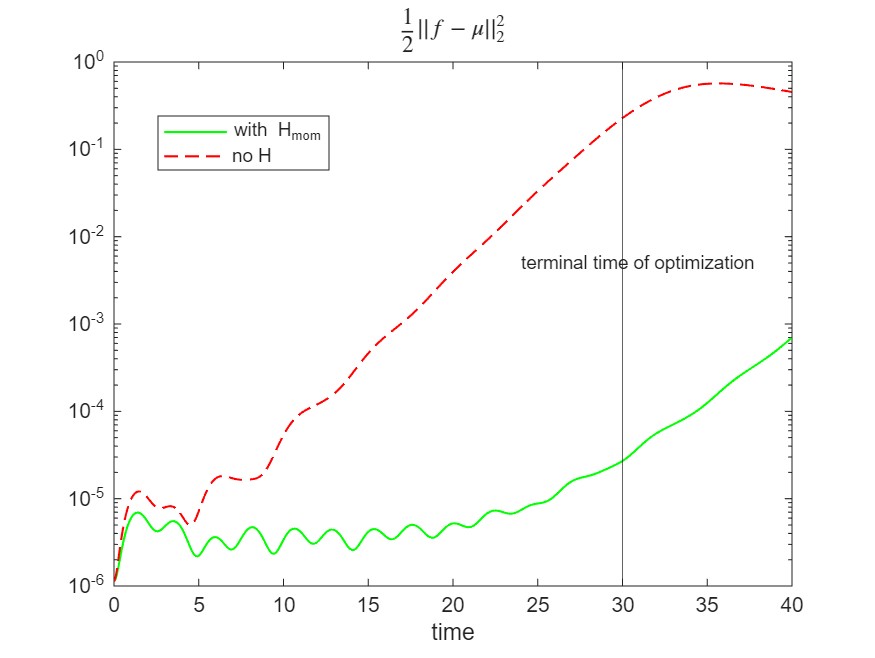}
			\subcaption{$L^2-$perturbation}
		\end{subfigure}
		\quad 
		\begin{subfigure}{0.4\textwidth}
			\includegraphics[scale = 0.25]{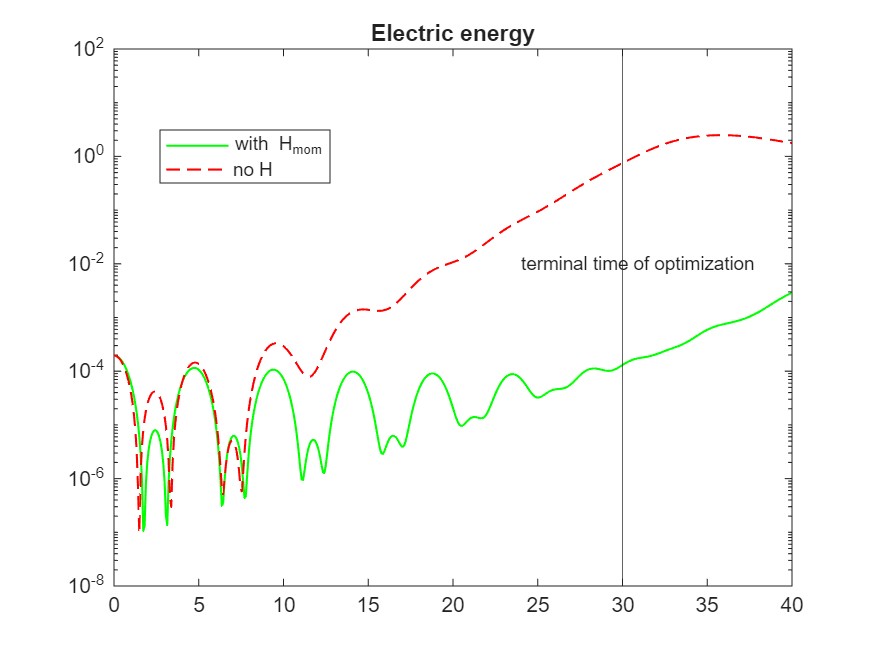}
			\subcaption{electric energy}
		\end{subfigure}
		\caption{Two stream instability. History of perturbation with respect to $H_{mom}$ optimized against $N = 30$ moments. Green lines represent the solutions generated by time-independent $H_{mom}$. Red lines represent the solutions without external field.}
		\label{fig:two stream perturb}
	\end{figure}
	
	\begin{figure}[h!]
		\centering
		\begin{subfigure}{0.3\textwidth}
			\includegraphics[scale = 0.24]{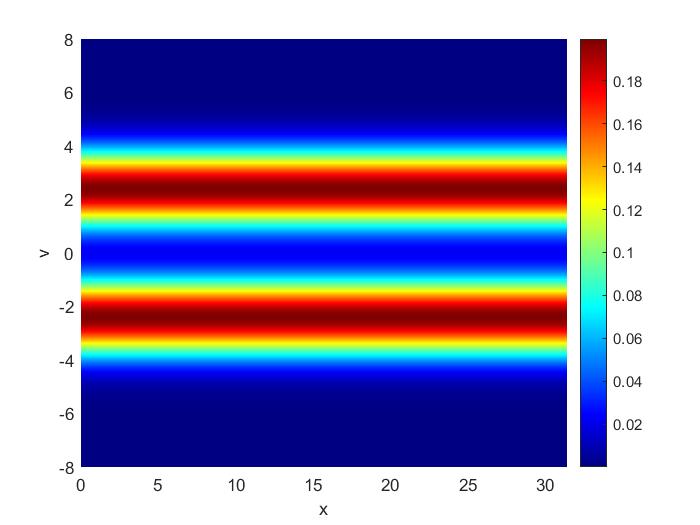} 
			\subcaption{target equilibrium $\feq(v)$}
		\end{subfigure}
		\quad
		\begin{subfigure}{0.3\textwidth}
			\includegraphics[scale = 0.24]{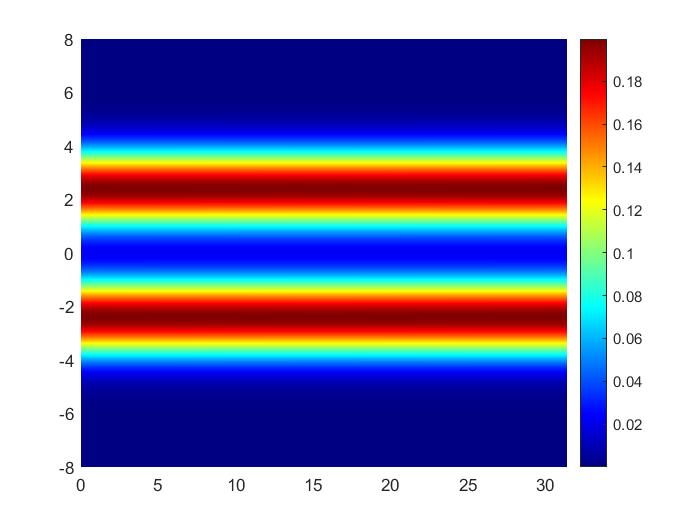}
			\subcaption{$f(x,v,0)$}
		\end{subfigure}
		\\
		\begin{subfigure}{0.3\textwidth}
			\includegraphics[scale = 0.24]{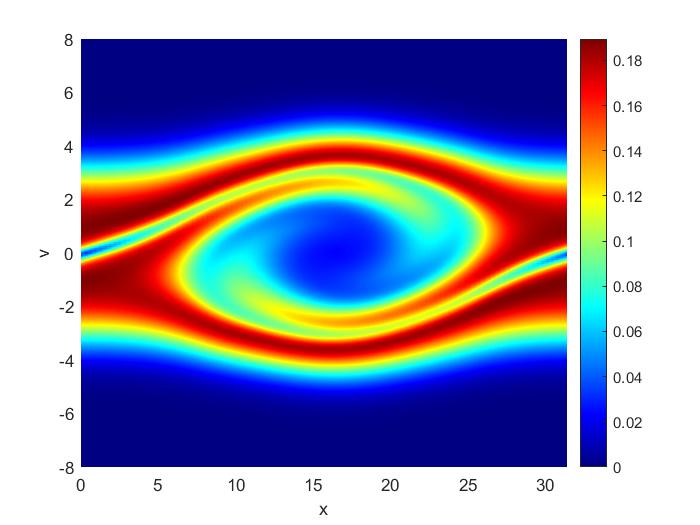} 
			\subcaption{$f(x,v,40)$ with $H = 0$}
		\end{subfigure}
		\quad
		\begin{subfigure}{0.3\textwidth}
			\includegraphics[scale = 0.24]{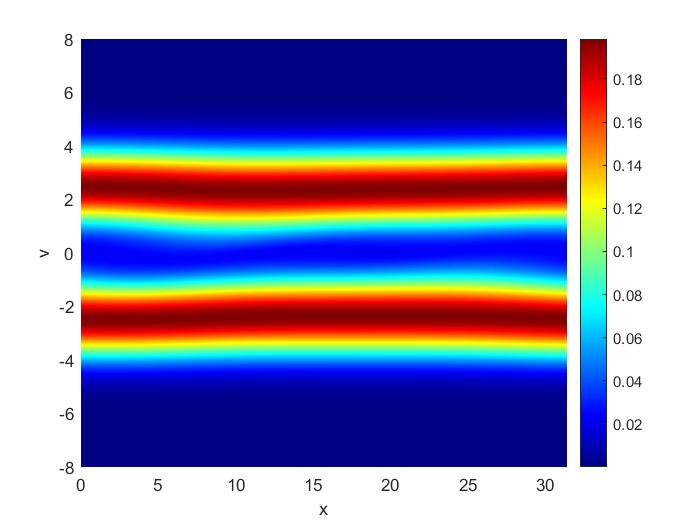} 
			\subcaption{$f(x,v,40)$ with $H_{mom}$}
		\end{subfigure}
		\caption{Two stream instability. Comparison of particle distributions. $H_{mom}$ is optimized through moment-based optimization \eqref{eq:optimization with truncated moments} with $N = 30$.}
		\label{fig:two stream f}
	\end{figure}
	
	Next, we compare the results generated by $H_{mom}$ optimized against different numbers of moments in the hydrodynamic optimization \eqref{eq:optimization with truncated moments}. As a reference, we also compute the external field $H_{VP}$ directly optimized against the original kinetic formulation \eqref{eq:optimization with VP}, that is, the moments are not truncated. The framework of semi-Lagrangian optimization for $H_{VP}$ follows the lines of \cite{einkemmer2024suppressing}, and the parameter update still follows the momentum gradient descent in Section \ref{sec:momentum gradient descent}. As shown in Figure \ref{fig:two stream compare N}, increasing the number of moments included in the optimization leads to a more effective control of the perturbation by the resulting optimized field $H_{mom}$ within the optimization time interval. This is somewhat expected, as the moment system is truncated at a higher order, the objective function \eqref{eq:optim target} provides a tighter upper bound on  $\frac{1}{2}||f(T)-\feq||^2_2$, and the constraint equations \eqref{eq:optim constraint} more accurately approximate the true Vlasov–Poisson dynamics near equilibrium. As a result, the performance of $H_{mom}$ becomes increasingly similar to that of the optimal control field $H_{VP}$. This can also be seen from the optimized external fields presented in Figure \ref{fig:two stream compare N}. However, it is important to note that solving a higher-order moment system incurs increased computational cost, which should be taken into account in practical applications.
	
	\begin{figure}[h!]
		\centering
		\begin{subfigure}{0.4\textwidth}
			\includegraphics[scale = 0.25]{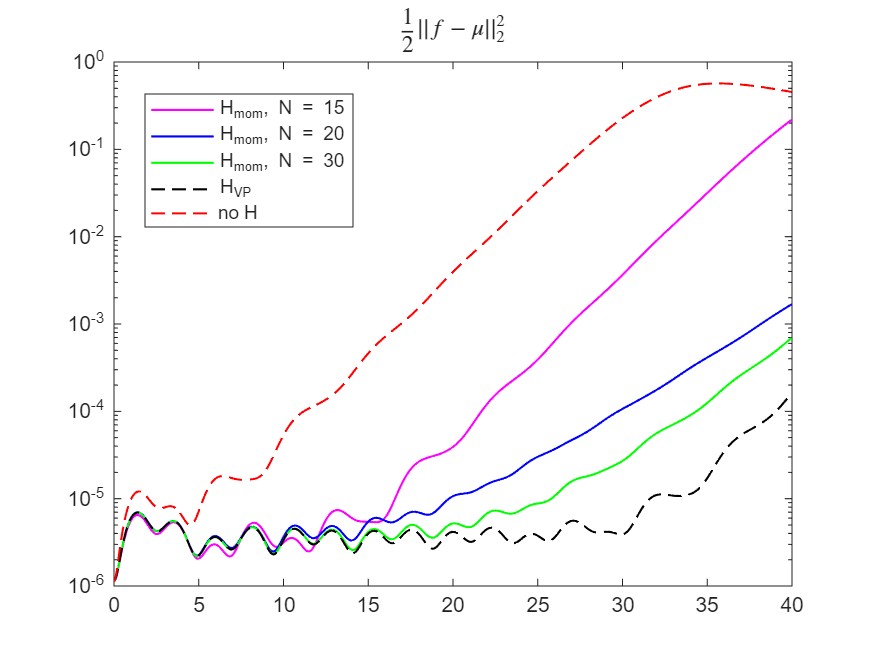}
			\subcaption{$L^2-$perturbation}
		\end{subfigure}
		\quad 
		\begin{subfigure}{0.4\textwidth}
			\includegraphics[scale = 0.25]{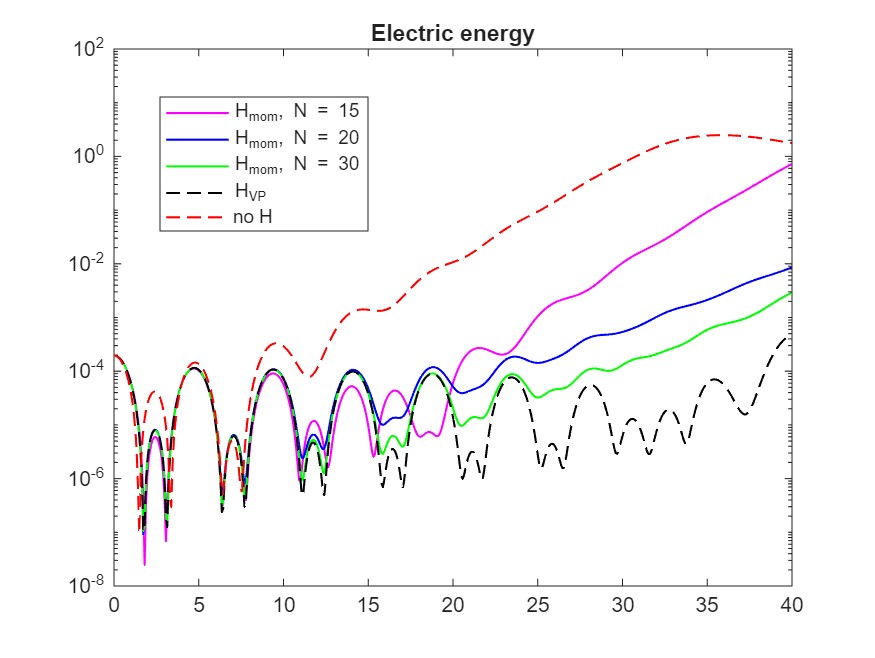}
			\subcaption{electric energy}
		\end{subfigure}
		\caption{Two stream instability. History of perturbation with respect to $H_{mom}$ optimized against different numbers of moments. As the number of moments increases, the result of moment-based optimization approaches that generated by $H_{VP}$ (black dash lines), which indicates the use of full moments.}
		\label{fig:two stream compare N}
	\end{figure}
	
	\begin{figure}[h!]
		\centering
		\includegraphics[scale = 0.25]{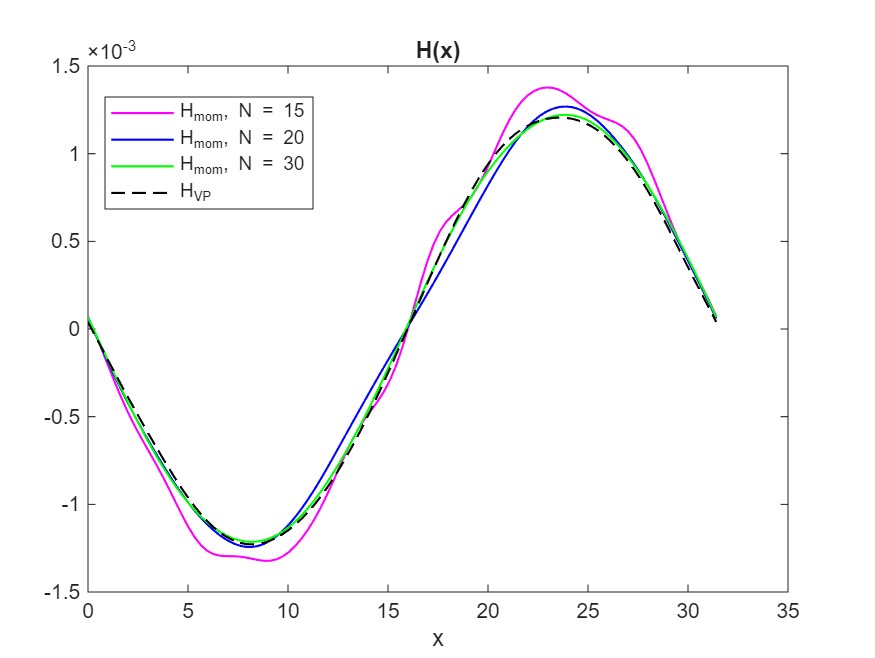}
		\caption{Two stream instability. Optimized external fields with respect to different numbers of moments.}
		\label{fig:two stream compare H}
	\end{figure}
	
	
	\subsection{Suppressing Bump-on-tail instability} 
	We next examine the validity of moment based control  \eqref{eq:optimization with truncated moments} for the bump-on-tail instability. More specially, the equilibrium we consider here takes the form: 
	\[
	\feq(v) = \frac{\omega_1}{\sqrt{2\pi}}\exp\left(-\frac{1}{2}v^2\right)+\frac{\omega_2}{\sqrt{2\pi v_t}}\exp\left(-\frac{1}{2v_t}(v-u)^2\right)
	\]
	with $\omega_1 = 0.8$, $\omega_2 = 0.2$, $u = 3.5$, $v_t = 0.5$. The perturbed initial state is set to
	\[
	f_0(x,v) = (1+\eps\sin(0.2 x))\feq(v), \quad (x,v)\in[0,10\pi]\times[-8,8].
	\]
	with $\eps = 10^{-3}$. We suppress the instability by introducing external field $H_{mom}$ of the form \eqref{eq:H expansion} optimized through the problem \eqref{eq:optimization with truncated moments} with  $T = 25$ and different numbers of moments \footnote{For $N = 15$ and $N = 20$, the iterations do not converge stably with the gradient approximation \eqref{eq:approx grad}, thus their computations are implemented with PyTorch autodifferentiation}. $K = 10$ modes are employed to construct $H$. Again, the computational results displayed in Figures \ref{fig:bump on tail perturb}--\ref{fig:bump on tail f} verify that introducing $H_{mom}$ can help suppress the perturbation growth effectively. The improvement in control achieved by using more moments in the optimization is also confirmed. The optimized external fields are presented in Figure \ref{fig:bump on tail H}.
	
	\begin{figure}[h!]
		\centering
		\begin{subfigure}{0.4\textwidth}
			\includegraphics[scale = 0.25]{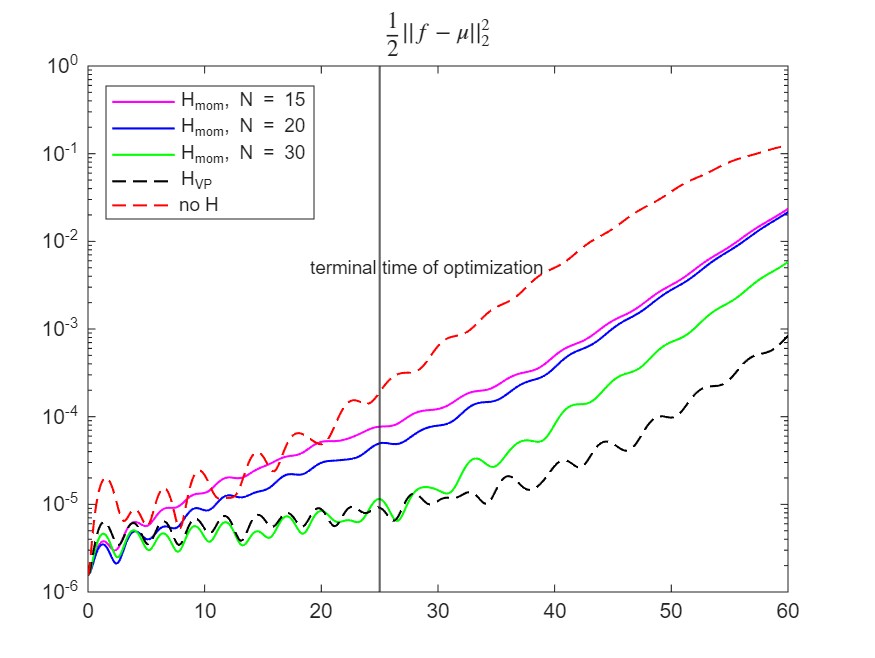}
			\subcaption{$L^2-$perturbation}
		\end{subfigure}
		\quad 
		\begin{subfigure}{0.4\textwidth}
			\includegraphics[scale = 0.25]{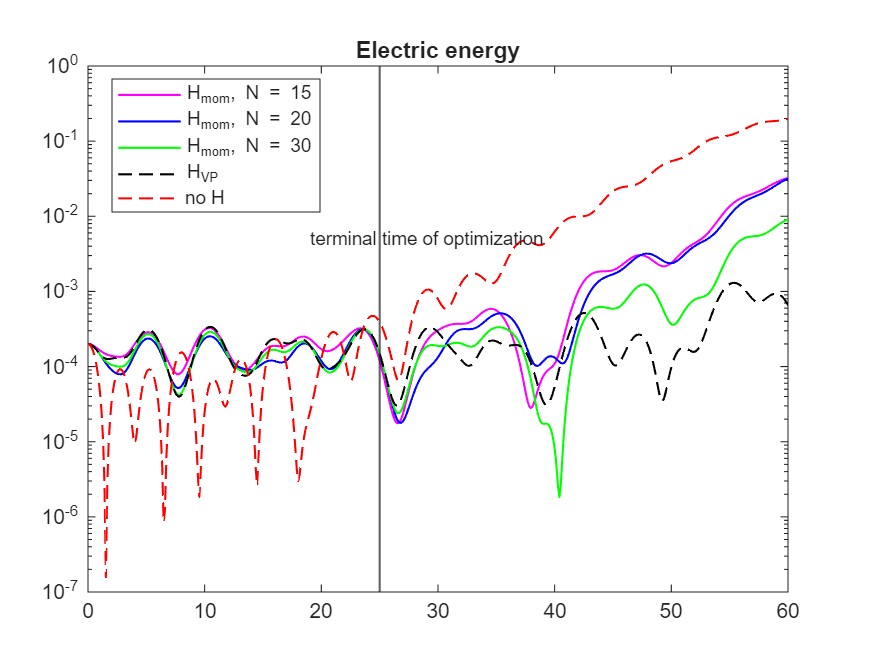}
			\subcaption{electric energy}
		\end{subfigure}
		\caption{Bump-on-tail instability. History of perturbation with respect to $H_{mom}$ optimized against different numbers of moments. Green lines represent the solutions generated by time-independent $H_{mom}$. Red lines represent the solutions without external field.}
		\label{fig:bump on tail perturb}
	\end{figure}
	
	\begin{figure}[h!]
		\centering
		\begin{subfigure}{0.3\textwidth}
			\includegraphics[scale = 0.24]{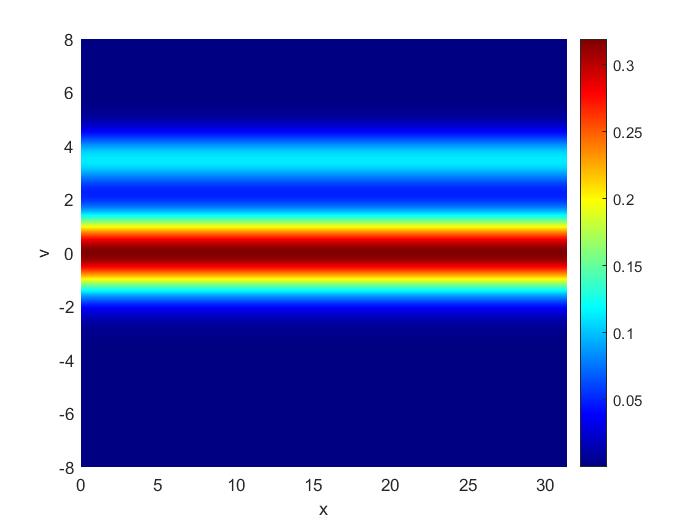} 
			\subcaption{target equilibrium $\feq(v)$}
		\end{subfigure}
		\quad
		\begin{subfigure}{0.3\textwidth}
			\includegraphics[scale = 0.24]{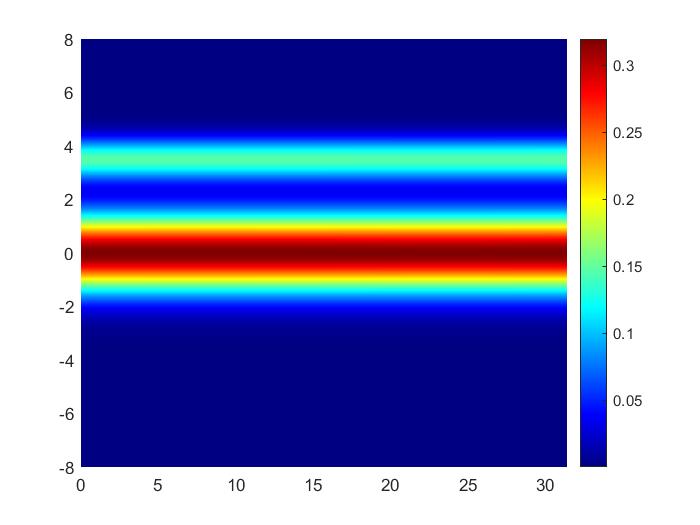}
			\subcaption{$f(x,v,0)$}
		\end{subfigure}
		\\
		\begin{subfigure}{0.3\textwidth}
			\includegraphics[scale = 0.24]{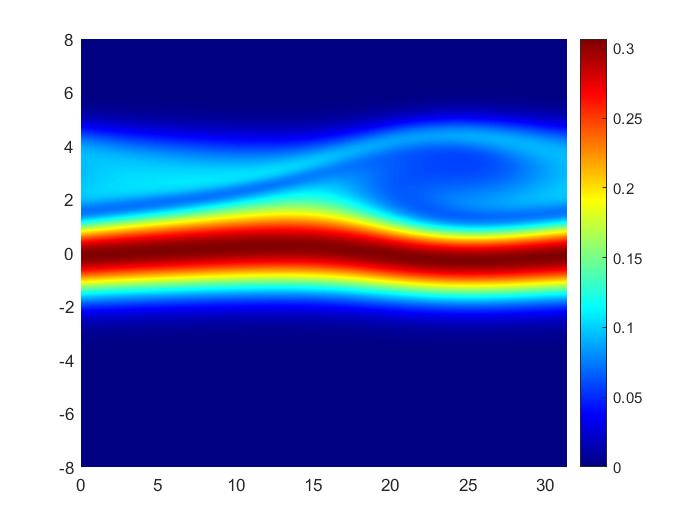} 
			\subcaption{$f(x,v,60)$ with $H = 0$}
		\end{subfigure}
		\quad
		\begin{subfigure}{0.3\textwidth}
			\includegraphics[scale = 0.24]{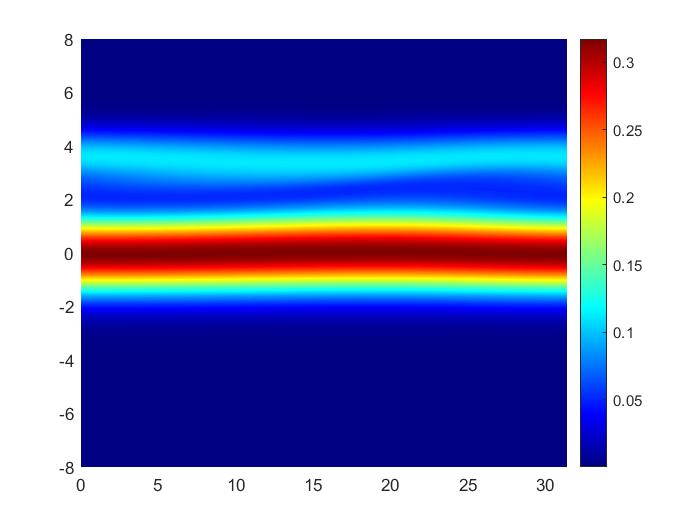} 
			\subcaption{$f(x,v,60)$ with $H_{mom}$}
		\end{subfigure}
		\caption{Bump-on-tail instability. Comparison of particle distributions. $H_{mom}$ is optimized through moment-based optimization \eqref{eq:optimization with truncated moments} with $N = 30$.}
		\label{fig:bump on tail f}
	\end{figure}
	
	\begin{figure}
		\centering
		\includegraphics[scale = 0.25]{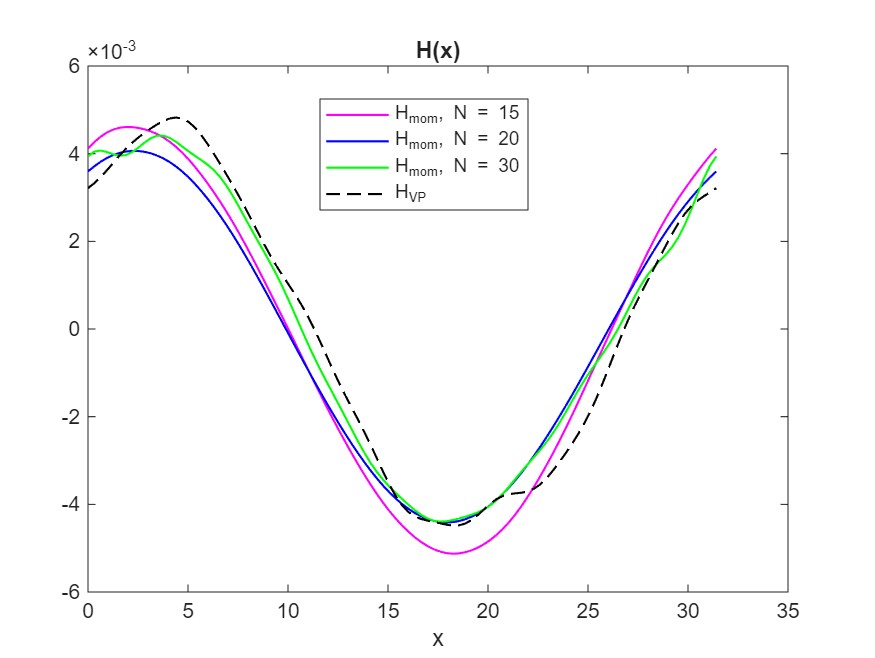}
		\caption{Bump-on-tail instability. Optimized external field with respect to different numbers of moments.}
		\label{fig:bump on tail H}
	\end{figure}

	\section*{Acknowledgment}
	JL and JC were supported by NSF-CCF:2212318, and JC was additionally supported by an Albert
	and Dorothy Marden Professorship. 
	LW is supported in part by NSF grant DMS-1846854, DMS-2513336, and the Simons Fellowship. JL and LW would like to thank the fruitful discussion with Professors Michael Herty and Lukas Einkemmer.

	\bibliographystyle{siam}
	\bibliography{Ref.bib, Ref_controlVP.bib}
	
\end{document}